




\documentclass[12pt]{amsart}
\usepackage{amssymb}
\usepackage{amsmath, amscd}
\usepackage{amsthm}
\usepackage{comment}
\usepackage[table,xcdraw]{xcolor}
\usepackage{ulem}
\usepackage{tikz}
\usepackage{float}
\usepackage{graphicx}
\usepackage{circuitikz}
\usepackage[colorlinks=true, linkcolor=blue,urlcolor=blue]{hyperref}

\newtheorem{theorem}{Theorem}[section]

\newtheorem{proposition}[theorem]{Proposition}
\newtheorem{lemma}[theorem]{Lemma}

\newtheorem{corollary}[theorem]{Corollary}
\theoremstyle{definition}

\newtheorem{example}[theorem]{Example}
\newtheorem{definition}[theorem]{Definition}


\topmargin0cm \headheight0cm \headsep1cm \topskip0cm \textheight23cm \footskip1.8cm \textwidth15cm

\oddsidemargin0cm \evensidemargin0cm

\parindent15pt







\topmargin0cm \headheight0cm \headsep1cm \topskip0cm \textheight23cm \footskip1.8cm \textwidth15cm

\oddsidemargin0cm \evensidemargin0cm

\parindent15pt

\begin{document}

\author[P. Danchev]{Peter Danchev}
\address{Institute of Mathematics and Informatics, Bulgarian Academy of Sciences, 1113 Sofia, Bulgaria}
\email{danchev@math.bas.bg; pvdanchev@yahoo.com}
	
\author[M. Doostalizadeh]{Mina Doostalizadeh}
\address{Department of Mathematics, Tarbiat Modares University, 14115-111 Tehran Jalal AleAhmad Nasr, Iran}
\email{d\_mina@modares.ac.ir;  m.doostalizadeh@gmail.com}	

\author[A. Moussavi]{Ahmad Moussavi}
\address{Department of Mathematics, Tarbiat Modares University, 14115-111 Tehran Jalal AleAhmad Nasr, Iran}
\email{moussavi.a@modares.ac.ir; moussavi.a@gmail.com}

\title[Weakly Strongly 2-Nil-Clean Rings]{Weakly Strongly 2-Nil-Clean Rings}
\keywords{Weakly nil-clean element (ring), weakly strongly nil-clean element (ring), Matrix rings}
\subjclass[2010]{16S34, 16U60}

\maketitle




\begin{abstract}
In this paper, we introduce and explore in-depth the notion of {\it weakly strongly 2-nil-clean rings} as a common non-trivial generalization of both strongly 2-nil-clean rings and strongly weakly nil-clean rings as defined and studied by Chen-Sheibani in the J. Algebra \& Appl. (2017). We, specifically, succeeded to prove that any weakly strongly 2-nil-clean ring is strongly $\pi$-regular and, concretely, it decomposes as the direct product of a strongly 2-nil-clean ring and a ring of the type $\mathbb{Z}_{2^k}$ for some $k\geq 1$.
\end{abstract}

\section{Introduction and Fundamental Concepts}

Throughout the current article, all rings are assumed to be associative with identity and all modules are unitary. We denote by $U(R)$, $nil(R)$ and $Id(R)$ the sets of invertible elements, nilpotent elements and idempotent elements in $R$, respectively. Likewise, $J(R)$ denotes the Jacobson radical of $R$, and $Z(R)$ denotes the center of $R$. The ring of $n\times n$ matrices over $R$ and the ring of $n\times n$ upper triangular matrices over $R$ are, respectively, denoted by $M_{n}(R)$ and $T_{n}(R)$.

Consulting with \cite{16} and \cite{17}, a ring is called {\it (strongly) clean} if every its element can be written as a sum of a unit and an idempotent (that commute with each other). In the past two decades, there has been intense interest in this class of rings as well as there are numerous generalizations and their induced versions.

In a similar aspect, according to Diesl \cite{2}, a ring is called {\it (strongly) nil-clean} if every its element can be written as a sum of a nilpotent and an idempotent (which commute with each other).

On the other hand, imitating Chen and Sheibani (\cite{chen}), a ring is said to be {\it strongly 2-nil-clean} if every its element is the sum of two idempotents and a nilpotent that commute with each other. They showed that a ring $R$ is strongly 2-nil-clean if, and only if, for all $a \in R$, $a - a^3 \in nil(R)$ if, and only if, for all $a \in R$, there exists an idempotent $e\in R$ such that $a^2 -e\in nil(R)$ if, and only if, every element is the sum of a tripotent and a nilpotent which commute.

Furthermore, again mimicking Chen and Sheibani (\cite{chen2}), a ring is said to be {\it strongly weakly nil-clean} if every its element is the sum or difference of a nilpotent and an idempotent that commutes with each other. They  proved that a ring $R$ is strongly weakly nil-clean if, and only if, for any $a\in R$, there exists an idempotent $e\in Z[a]$ such that $a\pm e \in nil(R)$ if, and only if, $nil(R)$ forms an ideal of $R$ and $R/nil(R)$ is weakly Boolean.

Thus, motivated by all of this, we introduce and investigate the more general concept of weakly strongly 2-nil-clean rings. In fact, we say that a ring $R$ is {\it weakly strongly 2-nil-clean} if, for every element $a\in R$, there are $e,f\in Id(R)$ and $n\in nil(R)$ that commute with each other and $a=\pm e\pm f+n$. We establish that $R$ is weakly strongly 2-nil-clean if, and only if, $30\in nil(R)$ and either $a^3-a\in nil(R)$, or $a(a-1)(a-2)\in nil(R)$, or $a(a+1)(a+2)\in nil(R)$ for all $a\in R$ (see Proposition~\ref{three}). Moreover, as a culmination of our examination, we successfully obtain that $R$ is weakly strongly 2-nil-clean if, and only if, $R=R_1\times  R_2$, where $6\in nil(R)$, $R_1$ is strongly 2-nil-clean and $R_2=\mathbb{Z}_{5^k}$ for some integer $k>1$ (see Theorem~\ref{maj}). In particular, every weakly strongly 2-nil-clean ring is strongly $\pi$-regular (see Lemma~\ref{2.12}).

\section{Results on Weakly Strongly 2-Nil-Clean Rings}

In the present section, we start our serious examination of the so-called weakly strongly 2-nil-clean rings investigating first their elementary properties.

\medskip

For the sake of completeness, we now state our main instrument.

\begin{definition}\label{2.1} Let $R$ be a ring. We say that $R$ is {\it weakly strongly 2-nil-clean} if, for any element $a\in R$, $a=\pm e\pm f+n$ with $e,f,n$ commuting with each other, where $e,f \in Id(R)$ and $n\in nil(R)$.
\end{definition}

Manifestly, strongly 2-nil-clean rings and strongly weakly nil-clean rings are both weakly strongly 2-nil-clean. However, the next construction demonstrates that the converse implications are impossible.

\begin{example} Let $R=\mathbb{Z}_5$. Then, a simple check shows that $R$ is weakly strongly 2-nil-clean, but it is {\it not} strongly weakly nil-clean, because $3$ can not expressed as $\pm e+n$, where $e=e^2\in R$ and $n\in nil(R)$. Also, we simply see that $R$ is {\it not} strongly 2-nil-clean, because $3$ can not expressed as $e+f+n$, where $e,f\in Id(R)$ and $n\in nil(R)$.
\end{example}

We now begin with a series of technicalities.

\begin{lemma}\label{1} Let $k\leq 2$ be an integer. Then, $\mathbb{Z}_{5^k}$ is a weakly strongly $2$-nil-clean ring.
\end{lemma}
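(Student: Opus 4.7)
The plan is to exploit the fact that $\mathbb{Z}_{5^k}$ is a commutative local ring for $k\in\{1,2\}$, so the commutativity condition in Definition~\ref{2.1} is automatic, and the idempotent/nilpotent structure is as simple as possible. Specifically, I would first record that $Id(\mathbb{Z}_{5^k})=\{0,1\}$ (since the ring is local with maximal ideal $5\mathbb{Z}_{5^k}$), and that $nil(\mathbb{Z}_{5^k})=5\mathbb{Z}_{5^k}$ (whose elements square to $0$ when $k=2$ and vanish when $k=1$).

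The central observation is then the following: as $e,f$ range over $\{0,1\}$, the expression $\pm e\pm f$ takes the values $\{-2,-1,0,1,2\}$, which constitutes a full set of coset representatives for $\mathbb{Z}/5\mathbb{Z}$. Hence for an arbitrary $a\in\mathbb{Z}_{5^k}$ I would choose the unique $r\in\{-2,-1,0,1,2\}$ satisfying $a\equiv r\pmod 5$, write $r=\pm e\pm f$ for appropriate $e,f\in\{0,1\}$, and set $n:=a-r\in 5\mathbb{Z}_{5^k}\subseteq nil(\mathbb{Z}_{5^k})$. This gives the required decomposition $a=\pm e\pm f+n$.

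I would handle the two cases briefly and separately: for $k=1$ we in fact have $n=0$ and $a\in\{0,\pm 1,\pm 2\}$ is already of the form $\pm e\pm f$; for $k=2$ the nilpotent correction $n\in\{0,5,10,15,20\}$ does the job. Since everything lives in a commutative ring, the commuting condition on $(e,f,n)$ is trivially satisfied, so no separate verification is needed.

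The main (and only) obstacle is essentially bookkeeping: making sure that the tabulation $\{\pm e\pm f:e,f\in Id(\mathbb{Z}_{5^k})\}=\{-2,-1,0,1,2\}$ is indeed a complete residue system modulo $5$, which is what allows the nilpotent remainder $a-r$ to always land in $5\mathbb{Z}_{5^k}$. Once that is recorded, the proof reduces to a one-line case distinction on $a\bmod 5$.
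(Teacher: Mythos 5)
Your proof is correct, and it is exactly the routine verification that the paper omits (its "proof" consists of the sentence "It is straightforward, so we leave the inspection to the interested reader"): the key point that $\{\pm e\pm f : e,f\in\{0,1\}\}=\{-2,-1,0,1,2\}$ is a complete residue system modulo $5$, combined with $nil(\mathbb{Z}_{5^k})=5\mathbb{Z}_{5^k}$ and automatic commutativity, settles the claim. As a minor remark, your argument works verbatim for every integer $k\geq 1$ (the hypothesis $k\leq 2$ is not needed, since $5$ is nilpotent in $\mathbb{Z}_{5^k}$ for all $k$), which is consistent with the paper's later use of $\mathbb{Z}_{5^k}$ for an arbitrary positive integer $k$ in Theorem~\ref{maj}.
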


\begin{proof} It is straightforward, so we leave the inspection to the interested reader.
\end{proof}

\begin{proposition}\label{hom} {\rm (i)} Any homomorphic image of a weakly strongly 2-nil-clean ring is again a weakly strongly 2-nil-clean ring, but the reciprocal claim is not generally true. In particular, $R$ is a weakly strongly 2-nil-clean ring if, and only if, the quotient $R/I$ is a weakly strongly 2-nil-clean ring, whenever $I$ is a nil-ideal of $R$.\\\indent
{\rm (ii)} The class of weakly strongly 2-nil-clean rings is not closed under formation of finite products; e.g., $\mathbb{Z}_5 \times \mathbb{Z}_5$ is not weakly strongly 2-nil-clean.
 \end{proposition}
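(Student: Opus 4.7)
The forward assertion of (i) is immediate: given a surjective ring homomorphism $\pi : R \twoheadrightarrow S$ and $a \in S$, pick any $b \in \pi^{-1}(a)$, use the hypothesis to write $b = \epsilon_1 e + \epsilon_2 f + n$ with $\epsilon_i \in \{\pm 1\}$, pairwise commuting $e, f \in Id(R)$, $n \in nil(R)$, and push through $\pi$; the images $\pi(e), \pi(f), \pi(n)$ remain commuting, with the first two idempotent and the last nilpotent. For the failure of the converse, I would defer to part (ii): once $\mathbb{Z}_5 \times \mathbb{Z}_5$ is exhibited there as a non-example, its projection to either factor $\mathbb{Z}_5$---which is weakly strongly 2-nil-clean by Lemma~\ref{1}---witnesses that the property does not lift from a homomorphic image back to the original ring.

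For the ``in particular'' clause, the $(\Rightarrow)$ direction is a special case of (i). For the reverse, take $a \in R$ and work in $R/I$: write $\bar{a} = \epsilon_1 \bar{e}_0 + \epsilon_2 \bar{f}_0 + \bar{n}_0$ with the three summands commuting pairwise. The key step is to lift $\bar{e}_0, \bar{f}_0$ to commuting idempotents $e, f \in R$ that additionally commute with $a$; this relies on the classical fact that idempotents lift uniquely modulo a nil ideal through a polynomial in any pre-image, together with a Peirce-type decomposition to ensure the lifts commute with one another and with $a$. Once such $e, f$ are in hand, set $n := a - \epsilon_1 e - \epsilon_2 f$. Then $\pi(n) = \bar{n}_0$ is nilpotent in $R/I$, so $n^k \in I$ for some $k$, and nilness of $I$ yields $n \in nil(R)$; by construction $n$ commutes with $e$ and $f$, giving the required decomposition.

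For part (ii), I plan to directly exhibit a non-representable element of $\mathbb{Z}_5 \times \mathbb{Z}_5$. Since $nil(\mathbb{Z}_5 \times \mathbb{Z}_5) = \{(0,0)\}$ and $Id(\mathbb{Z}_5 \times \mathbb{Z}_5) = \{(0,0), (1,0), (0,1), (1,1)\}$, the set of representable elements coincides with $\{\pm e \pm f : e, f \in Id(\mathbb{Z}_5 \times \mathbb{Z}_5)\}$, which is finite and easily enumerated. One then checks that, e.g., $(2,3)$ lies outside this set, so this element admits no decomposition of the required form and the ring fails the definition.

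The principal obstacle is the commuting idempotent lift in the ``in particular'' clause: one must simultaneously lift two commuting idempotents modulo the nil ideal $I$ so that the lifts also commute with the prescribed element $a$. This is a delicate but standard application of the lifting theory for nil ideals. The remaining steps reduce to functoriality of the definition under ring homomorphisms and a direct enumeration in a small finite ring.
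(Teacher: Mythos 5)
Your forward direction of (i) and your treatment of (ii) are correct and in fact more explicit than the paper's (which dismisses (ii) with ``can directly be checked''): since the two signs are shared between the coordinates, representing $(2,3)=(2,-2)$ would force $\epsilon_1=\epsilon_2=1$ and $e_1=f_1=1$ in the first coordinate, and then the second coordinate lies in $\{0,1,2\}$, so $(2,3)$ is indeed non-representable. Your counterexample to the converse ($\mathbb{Z}_5\times\mathbb{Z}_5\twoheadrightarrow\mathbb{Z}_5$) differs from the paper's ($\mathbb{Z}\twoheadrightarrow\mathbb{Z}_5$) but works equally well and introduces no circularity, since (ii) is proved independently.

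The genuine gap is exactly where you flag it: the reverse implication of the ``in particular'' clause. You need idempotent lifts $e,f$ of $\bar e_0,\bar f_0$ that commute not only with each other but also with the fixed element $a\in R$, and this is \emph{not} a standard consequence of lifting theory. The polynomial lift $p(x)$ commutes only with elements commuting with $x$; the pre-images of $\bar e_0$ form the coset $x+I$, and nothing guarantees that this coset meets the centralizer of $a$, nor does the Peirce decomposition relative to $e$ see $a$ at all. (Also, uniqueness of idempotent lifts modulo a nil ideal is false in general: in $T_2(\mathbb{Z}_2)$ modulo its strictly upper triangular ideal, both $E_{11}$ and $E_{11}+E_{12}$ lift the same idempotent.) The statement is nevertheless true, but the proof should abandon lifting the \emph{given} decomposition and instead pass through the element-wise polynomial characterization: from $\bar a=\pm\bar e_0\pm\bar f_0+\bar n_0$ with commuting pieces one computes that one of $a^3-a$, $a(a-1)(a-2)$, $a(a+1)(a+2)$ is nilpotent modulo $I$, hence nilpotent in $R$ because $I$ is nil, and likewise $30\in nil(R)$; the converse of Proposition~\ref{three} then produces a decomposition of $a$ whose idempotents are polynomials in $a$ (via \cite[Lemma 2.6]{zhou}), so the required commutativity is automatic. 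This is precisely how the paper later closes the issue in Proposition~\ref{11} --- and, to be fair, the paper's own proof of the present proposition is just as incomplete as yours at this step, since it stops at ``idempotents lift modulo $I$'' without addressing commutativity.
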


\begin{proof} (i) Apparently, any homomorphic image of a weakly strongly 2-nil-clean ring is weakly strongly 2-nil-clean, and the opposite implication is wrong as, for example, $\mathbb{Z}/5\mathbb{Z}\simeq \mathbb{Z}_5$ is a weakly strongly 2-nil-clean ring, but $\mathbb{Z}$ is obviously {\it not} a weakly strongly 2-nil-clean ring.

Now, assume that $I$ is a nil-ideal of $R$ and the factor-ring $R/I$ is weakly strongly 2-nil-clean. Choosing $a\in R$, we have $a+I\in R/I$. Then, there are $e+I,f+I\in Id(R/I)$ and $n+I\in nil(R/I)$ commuting with each other such that $a+I=\pm e\pm f+n+I$. As all idempotents lift modulo $I$, there are $g,h\in R$ such that $g-e\in nil(R)$ and $h-f\in nil(R)$, as required.

(ii) can directly be checked for validness.
\end{proof}

\begin{lemma}\label{2} Let $R$ be a weakly strongly 2-nil-clean ring. If $2\in nil(R)$, then $R$ is strongly nil-clean.
\end{lemma}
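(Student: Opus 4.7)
The plan is to reduce the signed representation of weakly strongly 2-nil-clean rings to the unsigned (strongly 2-nil-clean) one by exploiting $2\in nil(R)$, and then push the resulting polynomial identity $a-a^3\in nil(R)$ down to $a-a^2\in nil(R)$ so that Diesl's characterization of strongly nil-clean rings applies.

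First I would absorb the signs. Fix $a\in R$ and write $a=\e_1 e+\e_2 f+n$ with $\e_1,\e_2\in\{\pm 1\}$, $e,f\in Id(R)$, $n\in nil(R)$ pairwise commuting. Using the identity $-x=x-2x$, I rewrite $\e_1 e=e+(\e_1-1)e$ and $\e_2 f=f+(\e_2-1)f$. Since $2\in nil(R)$ and $e,f$ commute with everything present, the correction terms $(\e_1-1)e$ and $(\e_2-1)f$ are nilpotent and commute with $e$, $f$, $n$. Hence $a=e+f+n'$ with $n':=n+(\e_1-1)e+(\e_2-1)f$, where $e,f,n'$ pairwise commute, $e,f$ are idempotents, and $n'$ is nilpotent (a sum of commuting nilpotents). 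Therefore $R$ is strongly 2-nil-clean.

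By the Chen--Sheibani characterization in \cite{chen}, this gives $a-a^3\in nil(R)$ for every $a\in R$. I next exploit $2\in nil(R)$ once more, via
\[
a(a-1)(a+1)=a(a-1)\bigl((a-1)+2\bigr)=a(a-1)^2+2a(a-1).
\]
Both $a(a-1)(a+1)$ and $2a(a-1)$ lie in $nil(R)$, and since all terms sit inside the commutative subring $\mathbb{Z}[a]$, subtracting yields $a(a-1)^2\in nil(R)$. Multiplying by the commuting element $a$ gives $[a(a-1)]^2=a^2(a-1)^2\in nil(R)$, whence $a(a-1)\in nil(R)$; equivalently, $a-a^2\in nil(R)$ for every $a\in R$.

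Finally, Diesl's characterization of strongly nil-clean rings in \cite{2} states precisely that $R$ is strongly nil-clean exactly when $a-a^2\in nil(R)$ for all $a$, which completes the proof. No genuine obstacle arises; one only has to verify at each step that the newly produced nilpotents are polynomial expressions in the original commuting data, so that the standard facts ``a sum of commuting nilpotents is nilpotent'' and ``$y^2\in nil(R)$ implies $y\in nil(R)$'' apply without issue.
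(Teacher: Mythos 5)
Your proof is correct. The first half coincides with the paper's: both absorb the signs by writing $-e=e-2e$ and noting that $2e$ is nilpotent and central in the relevant commuting family, so every element gets an unsigned decomposition $a=e+f+n'$. Where you diverge is in passing from $a=e+f+n'$ to a strongly nil-clean expression. The paper argues directly on the decomposition: $(e+f)^2-(e+f)=2ef\in nil(R)$, so by \cite[Lemma 2.4]{zhou} the element $e+f$ lifts to an idempotent $\alpha(e+f)\in\mathbb{Z}[e+f]$ modulo a nilpotent, giving the strongly nil-clean form in one step. You instead route through the polynomial characterizations: Chen--Sheibani gives $a-a^3\in nil(R)$, and your identity $a(a-1)(a+1)=a(a-1)^2+2a(a-1)$ together with $2\in nil(R)$ cleanly degrades this to $a-a^2\in nil(R)$ inside the commutative subring $\mathbb{Z}[a]$; this is a nice element-free reduction that works for \emph{any} ring with $a-a^3\in nil(R)$ and $2\in nil(R)$, not just for the specific decomposition at hand. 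The only caveat is attribution at the last step: the equivalence ``$R$ strongly nil-clean $\Leftrightarrow$ $a-a^2\in nil(R)$ for all $a$'' is not literally in Diesl \cite{2}; the nontrivial direction is exactly the idempotent-lifting statement \cite[Lemma 2.4]{zhou} (or Ko\c{s}an--Wang--Zhou), which is what the paper itself invokes, so you should cite that instead. With that reference fixed, your argument is complete.
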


\begin{proof} Choose $r\in R$. Then, there are idempotents $e,f\in R$ and a nilpotent $n\in R$ such that $r=\pm e\pm f+n$ such that $e,f$ and $n$ commute with each other. As $2\in nil(R)$, we get $e=-e+m$, where $m\in nil(R)$ and $f=-f+q$, where $q\in nil(R)$. It, thus, follows that $r=e+f+k$, where $k\in nil(R)$ and $e,f$ and $k$ commute with each other.

On the other hand, $(e+f)^2=e+f+2ef$ implying $(e+f)^2-(e+f)\in nil(R)$. Exploiting \cite[Lemma 2.4]{zhou}, there exists $\alpha(t)\in \mathbb{Z}$ such that $[\alpha(e+f)]^2=\alpha(e+f)$ and $\alpha(e+f)=(e+f)+p$, where $p\in nil(R)$. It, thereby, follows that $r=\alpha(e+f)-p+q$ such that $-p+q\in nil(R)$ and $\alpha(e+f)(-p+q)=(-p+q)\alpha(e+f)$. So, $R$ is strongly nil-clean, as expected.
\end{proof}

\begin{lemma}\label{3} Let $R$ be a weakly strongly 2-nil-clean ring. If $3\in nil(R)$, then $R$ is strongly 2-nil-clean.
\end{lemma}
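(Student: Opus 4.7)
The plan is to use the Chen--Sheibani characterization cited earlier in the paper: namely, $R$ is strongly $2$-nil-clean if and only if $a - a^{3} \in nil(R)$ for every $a \in R$. So I will fix $a \in R$ and show $a - a^{3} \in nil(R)$ directly from the weakly strongly $2$-nil-clean decomposition, using $3 \in nil(R)$ (hence $6 = 2 \cdot 3 \in nil(R)$).

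Given $a \in R$, Definition~\ref{2.1} produces commuting $e, f \in Id(R)$ and $n \in nil(R)$ with $a = \pm e \pm f + n$. Set $g = \pm e \pm f$, so $a = g + n$ and $g$ commutes with $n$. The heart of the argument is to show $g^{3} - g \in nil(R)$, by splitting into the four sign cases. When the two signs agree, $g = \pm(e+f)$; a straightforward expansion using $e^{2} = e$, $f^{2} = f$ and $ef = fe$ gives $(e+f)^{3} = (e+f) + 6\,ef$, so $g^{3} - g = \pm 6\,ef \in nil(R)$, since $6 \in nil(R)$ by hypothesis. When the two signs disagree, $g = \pm(e - f)$; a similar expansion gives $(e-f)^{3} = e - f$, so in fact $g^{3} = g$ exactly. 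Thus in all four cases $g^{3} - g \in nil(R)$.

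Next I would bootstrap from $g$ to $a$. Writing
\[
a^{3} - a = (g+n)^{3} - (g+n) = (g^{3} - g) + n\bigl(3g^{2} + 3gn + n^{2} - 1\bigr),
\]
both summands lie in $nil(R)$: the first by the previous step, the second because $n$ is nilpotent and commutes with everything in sight, and because $nil(R)$ is closed under multiplication by commuting ring elements and under addition of commuting nilpotents. Therefore $a - a^{3} \in nil(R)$ for all $a \in R$, which by \cite{chen} is exactly the condition that $R$ be strongly $2$-nil-clean.

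There is no serious obstacle; the only point to be careful about is the case split on the two independent signs, and the observation that the cross-sign cases produce a genuine tripotent while the agreeing-sign cases leave the defect $\pm 6\,ef$ that is swallowed precisely because $3 \in nil(R)$ forces $6 \in nil(R)$. Once $g^{3} - g \in nil(R)$ is established, passing from $g$ to $a = g + n$ is automatic from the commutativity of $n$ with $g$.
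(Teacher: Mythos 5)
Your proof is correct and follows essentially the same route as the paper: both arguments reduce the claim to showing $a^{3}-a\in nil(R)$ via the sign case analysis, where the only nontrivial defect is $\pm 6ef$, absorbed because $3\in nil(R)$ forces $6\in nil(R)$. The sole (cosmetic) difference is that you invoke the Chen--Sheibani criterion ``$a-a^{3}\in nil(R)$ for all $a$'' directly, whereas the paper passes through Zhou's Lemma 2.6 (using $2\in U(R)$) to produce an explicit commuting tripotent-plus-nilpotent decomposition.
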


\begin{proof} Given $r\in R$, there are idempotents $e,f\in R$ and a nilpotent $n\in R$ such that $r=\pm e\pm f+n$, and $e,f$ and $n$ commute with each other. If, for a moment, $r=e+f+n$, then $r$ has strongly $2$-nil-clean expression and so there is nothing to prove.

If, however, $r=e-f+n$, since $3\in nil(R)$ we get $r^3-r\in nil(R)$. As $2\in U(R)$, \cite[Lemma 2.6]{zhou} applies to deduce that there exists $\alpha(t)\in \mathbb{Z}$ such that $\alpha(r)^3=\alpha(r)$ and $r=\alpha(r)+p$, where $p\in nil(R)$ and $p\alpha(r)=\alpha(r)p$. So, $r$ has the desired strongly 2-nil-clean expression in this case.

If now $r=-e-f+n$, then from $3\in nil(R)$ we derive $r^3-r\in nil(R)$. Utilizing a similar argument as that of above, there exists $\beta(t)\in \mathbb{Z}$ such that $\beta(r)^3=\beta(r)$ and $r=\beta(r)+q$, where $q\in nil(R)$ and $q\beta(r)=\beta(r)q$. Consequently, $r$ has a strongly 2-nil-clean expression and, therefore, $R$ is strongly 2-nil-clean, as suspected.
\end{proof}

As a valuable consequence, we yield.

\begin{corollary}\label{4} Let $R$ be a weakly strongly 2-nil-clean ring. If $6\in nil(R)$, then $R$ is strongly 2-nil-clean.
\end{corollary}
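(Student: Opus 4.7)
The plan is to exploit the hypothesis $6\in nil(R)$ to split $R$ as a product of two corner rings, one in which $2$ is nilpotent and one in which $3$ is nilpotent, then apply Lemmas \ref{2} and \ref{3} to each factor.

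First I would pick $n$ with $6^n=0$. Since $\gcd(2^n,3^n)=1$ in $\mathbb{Z}$, Bezout yields integers $u,v$ with $u\cdot 2^n+v\cdot 3^n=1$. Setting $e_1:=u\cdot 2^n$ and $e_2:=v\cdot 3^n$ (viewed as elements of $R$ via the unique ring map $\mathbb{Z}\to R$), these are central, satisfy $e_1+e_2=1$, and $e_1e_2=uv\cdot 6^n=0$. A one-line computation then gives $e_i^2=e_i(e_1+e_2)=e_i$, so $e_1,e_2$ are orthogonal central idempotents and
\[
R\;\cong\;e_1R\times e_2R.
\]

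Next I would verify the key nilpotency in each factor: $3^n\cdot e_1=u\cdot 2^n\cdot 3^n=u\cdot 6^n=0$, so $3$ is nilpotent in $e_1R$; symmetrically, $2^n\cdot e_2=v\cdot 6^n=0$, so $2$ is nilpotent in $e_2R$. By Proposition \ref{hom}(i) (each $e_iR$ is a homomorphic image of $R$), both factors remain weakly strongly 2-nil-clean. Now Lemma \ref{3} applied to $e_1R$ shows $e_1R$ is strongly 2-nil-clean, and Lemma \ref{2} applied to $e_2R$ shows $e_2R$ is strongly nil-clean, hence in particular strongly 2-nil-clean (write $a=e+n$ as $a=e+0+n$ with $0\in Id(e_2R)$).

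Finally, a direct product of two strongly 2-nil-clean rings is strongly 2-nil-clean: given $(a_1,a_2)$, decompose each coordinate and take the componentwise sum of the commuting pieces. Hence $R\cong e_1R\times e_2R$ is strongly 2-nil-clean, as required. The only mildly delicate point will be making the Bezout-to-idempotent construction clean enough to produce genuinely orthogonal central idempotents; everything else is a direct invocation of the two preceding lemmas and the behaviour of the class under finite products and homomorphic images.
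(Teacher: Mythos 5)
Your proof is correct and follows essentially the same route as the paper: split $R$ into a factor where $2$ is nilpotent and one where $3$ is nilpotent, then invoke Lemmas \ref{2} and \ref{3}. The only difference is cosmetic but slightly to your advantage: the paper passes to $R/6R\simeq R_1\times R_2$ and then asserts the conclusion lifts back to $R$ (tacitly using that strong 2-nil-cleanness passes up modulo the nil ideal $6R$), whereas your Bezout construction of the central orthogonal idempotents $e_1=u\cdot 2^n$, $e_2=v\cdot 3^n$ decomposes $R$ itself and so avoids that lifting step entirely.
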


\begin{proof} As $6\in nil(R)$, we may write $R/6R\simeq R_1\times R_2$, where $2\in nil(R_1)$ and $3\in nil(R_2)$. But, as $R_1$ and $R_2$ are weakly strongly 2-nil-clean rings in conjunction with Lemmas \ref{2} and \ref{3}, one concludes that $R/6R$ is too a strongly 2-nil-clean ring. Thus, $R$ is a strongly 2-nil-clean ring, as wanted.
\end{proof}

\begin{lemma}\label{5} Let $R$ be a weakly strongly 2-nil-clean ring. If there are $e,f\in Id(R)$ and $n\in nil(R)$ commuting with each other such that  $-2=e\pm f+n$, then $R$ is strongly 2-nil-clean.
\end{lemma}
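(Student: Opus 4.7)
My plan is to reduce to Corollary~\ref{4} by proving that, in either sign-case of the hypothesis, one has $6\in nil(R)$; once this is in hand the conclusion is immediate, since $R$ is already assumed weakly strongly 2-nil-clean. The key input is the elementary fact that a sum (resp.\ difference) of two commuting idempotents is a root of $x(x-1)(x-2)$ (resp.\ $x(x-1)(x+1)=x^{3}-x$). Indeed, using $e^{2}=e$, $f^{2}=f$ and $ef=fe$, one checks by direct computation that
\[
(e+f)\bigl((e+f)-1\bigr)\bigl((e+f)-2\bigr)=0 \qquad \text{and} \qquad (e-f)^{3}=(e-f).
\]

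Now I would split into the two sign-cases. If $-2=e+f+n$, I set $r=e+f=-2-n$ and substitute into the first identity to obtain $(-2-n)(-3-n)(-4-n)=0$; expanding, the only $n$-free term is $(-2)(-3)(-4)=-24$, while every remaining summand has a factor of $n$ and hence lies in $nil(R)$, so $24\in nil(R)$. A short prime-power step then upgrades this: from $24^{k}=2^{3k}3^{k}=0$ one deduces $6^{3k}=3^{2k}\cdot(2^{3k}3^{k})=0$, whence $6\in nil(R)$. If instead $-2=e-f+n$, the analogous substitution into $(e-f)^{3}=(e-f)$, i.e.\ $(-2-n)(-3-n)(-1-n)=0$, produces $n$-free term $(-2)(-3)(-1)=-6$, which gives $6\in nil(R)$ directly.

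In either case $6\in nil(R)$, and Corollary~\ref{4} then concludes that $R$ is strongly 2-nil-clean. The main obstacle I anticipate is the ``$+$'' case: the polynomial identity there only yields $24\in nil(R)$, so one must pass through the auxiliary prime-power observation to pin down $6\in nil(R)$; by contrast, the ``$-$'' case is essentially automatic once the tripotency $(e-f)^{3}=(e-f)$ is recorded. The verifications of the two identities and the expansions of the cubic substitutions are mechanical and should not present any difficulty beyond careful bookkeeping.
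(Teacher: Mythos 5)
Your proof is correct and follows essentially the same strategy as the paper: in both sign-cases you establish $6\in nil(R)$ and then invoke Corollary~\ref{4}. The only difference is organizational --- the paper reaches $6\in nil(R)$ by repeatedly squaring the relation $-2=e\pm f+n$ and manipulating the resulting equations, whereas you substitute $e\pm f=-2-n$ into the identities $(e+f)(e+f-1)(e+f-2)=0$ and $(e-f)^3=e-f$, which yields the same nilpotency (via $24\in nil(R)$ in the ``$+$'' case, exactly as in the paper) with cleaner bookkeeping.
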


\begin{proof} Write $-2=e+f+n$, where $e,f\in Id(R)$ and $n\in nil(R)$ such that they commute with each other. Then, $$4=e+f+2ef+m=-2+2ef+m,$$ where $m=n^2+2en+2fn\in nil(R)$. So, $6=2ef+m$ and hence $36=4ef+q$, where $q=m^2+2efm\in nil(R)$. From $6=2ef+m$, we get $12=4ef+2m$. It follows that $$36-12=4ef-4ef+q-2m=q-2m\in nil(R).$$ Thus, $6\in nil(R)$ and so Corollary \ref{4} works to derive that $R$ is strongly 2-nil-clean.

However, if $-2=e-f+n$, then one writes that $$4=e+f-2ef+m=e-f+2f-2ef+m=-2+2f-ef+m$$ for some $m\in nil(R)$. Thus, $6=2f-2ef+m$, forcing that $$6e=2ef-2ef+me=me\in nil(R).$$ Further, from $-2=e-f+n$, we get $-12=6e-6f+6n=-6f+6n$. Hence, $12=6f-6n$. From this and $6=2f-2ef+m$, we infer $$18=6f-6ef+3m=6f+3m=12+6n+3m.$$ It, therefore, follows that $6\in nil(R)$ and Corollary \ref{4} guarantees that $R$ is strongly 2-nil-clean, as promised.
\end{proof}

\begin{lemma}\label{2nil} Let $R$ be a ring such that $3\in nil(R)$. Then, the following three statements are equivalent:\\\indent (1) $R$ is a strongly 2-nil-clean ring;\\\indent (2) For any $a \in R$, there exist two idempotents $e, f \in R$ and a nilpotent $w\in  R$ which commute with each other such that $a = e - f + w$;\\\indent  (3) For any $a \in R$, there exist two idempotents $e, f \in R$ and a nilpotent $w\in  R$ which commute with each other such that $a = -e - f + w$.
\end{lemma}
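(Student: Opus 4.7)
The plan rests on two facts already at hand: the Chen--Sheibani characterization (recalled in the introduction) that $R$ is strongly 2-nil-clean if and only if $a^{3}-a\in nil(R)$ for every $a\in R$, and the observation that $3\in nil(R)$ forces $2\in U(R)$, since $-2=1-3$ and $1-3$ is a unit by the usual geometric-series argument. I would then prove the cycle $(1)\Rightarrow(2)\Rightarrow(1)$ and $(1)\Rightarrow(3)\Rightarrow(1)$, treating the two ``reverse'' implications uniformly.

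For $(2)\Rightarrow(1)$ and $(3)\Rightarrow(1)$ the strategy is to compute $a^{3}-a$ inside the commutative subring generated by the given data. In case $(2)$, starting from $a=e-f+w$ with $e,f,w$ pairwise commuting, a direct expansion (using $e^{2}=e$, $f^{2}=f$, $ef=fe$) gives $(e-f)^{3}=e-f$, so $a^{3}-a$ lies in the nilpotent ideal generated by $w$ in this commutative subring; hence $a^{3}-a\in nil(R)$. In case $(3)$, starting from $a=-e-f+w$, the analogous computation yields $(-e-f)^{3}=-e-f-6ef$, so $a^{3}-a\equiv -6ef$ modulo the nilpotent ideal generated by $w$. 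Since $3\in nil(R)$ implies $6\in nil(R)$, the central element $6ef$ is itself nilpotent, and once again $a^{3}-a\in nil(R)$. In both cases the Chen--Sheibani criterion yields $(1)$.

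The implication $(1)\Rightarrow(3)$ is immediate: applying $(1)$ to $-a$ gives $-a=e+f+w$ with pairwise commuting $e,f\in Id(R)$ and $w\in nil(R)$, so $a=-e-f+(-w)$ has the required form. For $(1)\Rightarrow(2)$, I would closely follow the argument of Lemma \ref{3}: since $a^{3}-a\in nil(R)$ and $2\in U(R)$, Zhou's lemma (as cited there) produces a polynomial $t=\alpha(a)\in R$ satisfying $t^{3}=t$ and $a-t\in nil(R)$. Using invertibility of $2$, I would then set $e=(t^{2}+t)/2$ and $f=(t^{2}-t)/2$; a brief check based on $t^{3}=t$ (hence $t^{4}=t^{2}$) confirms that $e$ and $f$ are orthogonal idempotents with $e-f=t$, and since they are polynomials in $a$ they commute with $a$ and with $w:=a-t$. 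Thus $a=e-f+w$ is an expression of the form required in $(2)$.

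The plan presents no serious obstacle: the hypothesis $3\in nil(R)$ does exactly two jobs, namely making $2$ a unit (which unlocks the tripotent decomposition used in $(1)\Rightarrow(2)$) and forcing $6$ to be nilpotent (which absorbs the cross-term $6ef$ obstructing $(3)\Rightarrow(1)$).
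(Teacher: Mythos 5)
Your proposal is correct, but it takes a genuinely different route from the paper's. The paper handles $(1)\Leftrightarrow(2)$ by citing Chen--Sheibani directly, and proves $(2)\Rightarrow(3)$ by pure idempotent algebra: it rewrites $a=e(1-f)-(1-e)f+n$, uses $3\in nil(R)$ to replace $e(1-f)$ by $-2e(1-f)$ up to the nilpotent $3e(1-f)$, and exhibits the two idempotents $e(1-f)$ and $e(1-f)+(1-e)f$ explicitly; the converse $(3)\Rightarrow(2)$ is just the shift $a\mapsto a-1$. You instead funnel everything through the nilpotency criterion $a^{3}-a\in nil(R)$: the identities $(e-f)^{3}=e-f$ and $(e+f)^{3}=e+f+6ef$, with $6\in nil(R)$ absorbing the cross term, give $(2)\Rightarrow(1)$ and $(3)\Rightarrow(1)$, while Zhou's lifting lemma plus $2\in U(R)$ and the orthogonal idempotents $(t^{2}\pm t)/2$ give $(1)\Rightarrow(2)$, and $(1)\Rightarrow(3)$ is negation. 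Your version is more uniform and is self-contained modulo tools the paper already deploys elsewhere (the $a^{3}-a$ characterization and Zhou's lemma both appear in Lemma~\ref{3} and Proposition~\ref{three}); the paper's treatment of $(2)\Leftrightarrow(3)$ is more elementary, requiring no lifting lemma and no invertibility of $2$. Both arguments are valid.
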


\begin{proof} (1) $\Leftrightarrow$ (2). It follows at once from \cite[Lemma 2.2]{chen}.\\
(2) $\Rightarrow$ (3). Assume that $a\in R$. Then, $a=e-f+n$, where $e,f\in Id(R)$ and $n\in nil(R)$ that all commute. Since $a=ea+(1-e)a$, we get $a=e(1-f)-(1-e)f+n$. As $3\in nil(R)$, we receive $3e(1-f)\in nil(R)$ and so $e(1-f)=-2e(1-f)+m$, where $m=3e(1-f)$. It, consequently, follows that $$a=-2e(1-f)-(1-e)f+n+m=-e(1-f)-(e(1-f)+(1-e)f).$$ It is now easy to verify that $e(1-f), (e(1-f)+(1-e)f)\in Id(M)$, as requested.\\\indent (3) $\Rightarrow$ (2). Letting $a\in R$, then $a-1=-e-f+n$, where $e,f\in Id(R)$ and $n\in nil(R)$ that all commute. So, $a=1-e-f+n$, as needed.
\end{proof}

We now have all the machinery necessary to illustrate truthfulness of the following main assertion.

\begin{proposition}\label{2.2} Let $\{R_{\alpha}\}$ be a finite collection of rings. Then, the direct product
$\prod_{\alpha} R_{\alpha}$ is weakly strongly 2-nil-clean if, and only if, each $R_{\alpha}$ is weakly strongly 2-nil-clean, and at most one $R_{\alpha}$ is not strongly 2-nil-clean.
\end{proposition}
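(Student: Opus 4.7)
The plan is to treat the two implications separately, leveraging Lemma \ref{5} for the forward direction and an auxiliary sign-form flexibility observation for the reverse direction.

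For the forward implication, suppose $\prod_{\alpha} R_\alpha$ is weakly strongly 2-nil-clean. Each factor $R_\alpha$ is a homomorphic image of the product, so Proposition \ref{hom}(i) immediately delivers that every $R_\alpha$ is weakly strongly 2-nil-clean. To control the exceptional factors, I argue by contradiction: if two distinct factors $R_1$ and $R_2$ both fail to be strongly 2-nil-clean, then since $R_1 \times R_2$ is itself a homomorphic image of $\prod_{\alpha} R_\alpha$, it is weakly strongly 2-nil-clean. I then examine the element $(-2, 2) \in R_1 \times R_2$, writing it as $\epsilon E + \delta F + N$ with $\epsilon, \delta \in \{+,-\}$, and case-split on the four sign patterns. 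The three patterns $(+,+)$, $(+,-)$, $(-,+)$ yield, after projection to $R_1$, a decomposition $-2 = e \pm f + n$ inside $R_1$ (with $e,f,n$ commuting and of the appropriate types); the remaining pattern $(-,-)$ projected to $R_2$ gives $2 = -e - f + n$, which rearranges to $-2 = e + f + (-n)$. Each of these conclusions triggers Lemma \ref{5} on $R_1$ or $R_2$, forcing strong 2-nil-cleanness and producing a contradiction.

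For the reverse implication, suppose each $R_\alpha$ is weakly strongly 2-nil-clean and that at most one factor, say $R_1$, fails to be strongly 2-nil-clean. The key technical ingredient is the following sign-form flexibility statement: in any strongly 2-nil-clean ring $R$, every element $a$ admits decompositions $a = \pm e \pm f + n$ in \emph{all four} sign patterns, with $e,f \in Id(R)$, $n \in nil(R)$, commuting. To see this, applying the strong 2-nil-clean property to $-a$ converts an $E + F + N$ decomposition of $-a$ into $a = -E - F + (-N)$, which handles $(-,-)$; applying it to $a + 1$ yields $a + 1 = E + F + N$ whence $a = E - (1 - F) + N$, and $1 - F$ is an idempotent commuting with $E$, $F$, $N$, which handles $(+,-)$ (and symmetrically $(-,+)$). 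Given $a = (a_\alpha)$ in $\prod R_\alpha$, decompose $a_1 = \epsilon e_1 + \delta f_1 + n_1$ for some signs dictated by $R_1$, then use the flexibility lemma to decompose each $a_\alpha$ ($\alpha \geq 2$) in the \emph{same} sign pattern $(\epsilon, \delta)$; assembling componentwise gives the required global decomposition of $a$, since idempotency is componentwise, a finite tuple of nilpotents is nilpotent, and commutativity is preserved coordinatewise.

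The main obstacle is the sign-form flexibility lemma, because Lemma \ref{2nil} grants such flexibility only under the extra hypothesis $3 \in nil(R)$, whereas a general strongly 2-nil-clean ring only satisfies $6 \in nil(R)$. Circumventing this restriction via the auxiliary elements $-a$ and $a+1$ is what makes the componentwise assembly in the reverse direction go through without needing any Peirce-style decomposition of the factors.
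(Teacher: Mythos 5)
Your proof is correct. The forward direction is essentially the paper's own argument: both test the element $(-2,2)$ in the product of two hypothetical non--strongly-2-nil-clean factors and feed each of the four sign patterns into Lemma~\ref{5} to force a contradiction. The reverse direction, however, takes a genuinely different and cleaner route. The paper first splits $\prod_{\alpha\neq\alpha_1}R_\alpha\simeq R_1\times R_2$ with $3\in nil(R_1)$ and $2\in nil(R_2)$, then extracts the sign forms $e-f+n$ and $-e-f+n$ in $R_1$ from Lemma~\ref{2nil} (whose hypothesis $3\in nil(R)$ is essential there) and patches the strongly nil-clean factor $R_2$ by hand, e.g.\ rewriting $e_2+n_2$ as $-e_2+(n_2+2e_2)$. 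Your sign-form flexibility lemma bypasses all of this: applying the strongly 2-nil-clean decomposition to $-a$ gives $a=-E-F+(-N)$, and applying it to $a+1$ gives $a=E-(1-F)+N$, so every element of \emph{any} strongly 2-nil-clean ring admits all four sign patterns with no arithmetic hypothesis on the ring whatsoever. (Note that the paper itself uses the shift-by-$1$ trick in the $(3)\Rightarrow(2)$ step of Lemma~\ref{2nil}; your observation is that the only direction actually needed for the assembly follows from the definition alone, whereas the $3\in nil(R)$ hypothesis is consumed only by the converse implications of that lemma, which you never use.) This makes the coordinatewise assembly uniform across all factors and renders the Peirce-style factorization unnecessary. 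What the paper's route buys is merely the reuse of machinery already established (Lemmas~\ref{2}, \ref{3} and \ref{2nil}); your argument is more self-contained, shorter, and proves a slightly more general auxiliary fact along the way.
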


\begin{proof} Let $\prod_{\alpha} R_{\alpha}$ be weakly 2-nil-clean. Then, each $R_{\alpha}$ is weakly 2-nil-clean, because, in view of Proposition~\ref{hom}, homomorphic images of weakly strongly 2-nil-clean rings are again weakly strongly 2-nil-clean.

Suppose now that there are two indices, say $\alpha_1$ and $\alpha_2$, such that $\alpha_1\neq \alpha_2$ and both $R_{\alpha_1}$ and $R_{\alpha_2}$ are {\it not} 2-nil-clean. But, as $R_{\alpha_1}\times R_{\alpha_2}$ is a homomorphic image of $R = \prod R_{\alpha}$, we can get $R_{\alpha_1}\times R_{\alpha_2}$ is weakly strongly 2-nil-clean. Considering the element $(-2,2)$ of $R_{\alpha_1}\times R_{\alpha_2}$, one finds that there are $(e,f),(g,h)\in Id(R_{\alpha_1}\times R_{\alpha_2})$ and $(n_1,n_2)\in nil(R_{\alpha_1}\times R_{\alpha_2})$ commuting with each other such that $(-2,2)=\pm(e,f)\pm (g,h)+(n_1,n_2)$. If $(-2,2)=\pm(e,f)+ (g,h)+(n_1,n_2)$, then $-2=\pm e+g+n_1$. So, Lemma \ref{5} gives that $R_{\alpha_1}$ is strongly 2-nil-clean -- a contradiction. If $(-2,2)=(e,f)\pm (g,h)+(n_1,n_2)$, then via a similar argument $R_{\alpha_1}$ is strongly 2-nil-clean -- again a contradiction. Finally, if $(-2,2)=-(e,f)- (g,h)+(n_1,n_2)$, then $2=-f-h+n_2$ and so $-2=f+h-n_2$. Applying Lemma \ref{5}, it must be that $R_{\alpha_2}$ is strongly 2-nil-clean, which is an absurd. Thus, $(-2,2)$ can not express as weakly strongly 2-nil-clean decomposition. So, either $\alpha_1$ or $\alpha_2 $ has to be strongly 2-nil-clean, as stated.

Conversely, assume that $R_{{\alpha}_1}$ is a weakly strongly 2-nil-clean ring that is {\it not} strongly 2-nil-clean and $R_{\alpha}$ is strongly 2-nil-clean for all other $\alpha\neq \alpha_1$. We can now write $\prod_{\alpha\neq \alpha_1} R_{\alpha}\simeq R_1\times R_2$, where $3\in nil(R_1)$ and $2\in nil(R_2)$. So, we arrive at the isomorphism $\prod_{\alpha} R_{\alpha}\cong R_{\alpha_1}\times R_1\times R_2$, where, as proven above in Lemmas~\ref{3} and \ref{2}, $R_{\alpha_1}$ is weakly strongly 2-nil-clean that is {\it not} strongly 2-nil-clean, $R_1$ is a strongly 2-nil-clean ring and $R_2$ is a strongly nil-clean ring. Now, let $(r,r_1,r_2)\in\prod R_{\alpha}$. Then, there are $e,f\in Id(R)$ and $n\in nil(R)$ all commuting such that $r=\pm e\pm f+n$. We, hereafter, differ three cases as follows:

\medskip

\noindent{\bf Case (1):} If $r=e+f+n$, then Lemma \ref{2nil} enables us that $r_1=e_1+f_1+n_1$ for some $e_1,f_1\in Id(R_1)$ and $n_1\in nil(R_1)$ which all commute, and that there exists $e_2\in Id(R_2)$ and $n_2\in nil(R_2)$ which commute such that $r_2=e_2+n_2$. It follows now that $$(r,r_1,r_2)=(e,e_1,e_2)+(f,f_1,0)+(n,n_1,n_2),$$ and, surely, $(e,e_1,e_2),(f,f_1,0)\in Id(\prod_{\alpha} R_{\alpha})$ and $(n,n_1,n_2)\in nil(\prod_{\alpha} R_{\alpha})$ that commute with each other.

\medskip

\noindent{\bf Case (2):} If $r=e-f+n$, then Lemma \ref{2nil} tells us that $r_1=e_1-f_1+n_1$ for some $e_1,f_1\in Id(R_1)$ and $n_1\in nil(R_1)$ which all commute, and that there exists $e_2\in Id(R_2)$ and $n_2\in nil(R_2)$ which commute such that $r_2=e_2+n_2$. It follows now that $$(r,r_1,r_2)=(e,e_1,e_2)-(f,f_1,0)+(n,n_1,n_2),$$ and, evidently, $(e,e_1,e_2),(f,f_1,0)\in Id(\prod_{\alpha} R_{\alpha})$ and $(n,n_1,n_2)\in nil(\prod_{\alpha} R_{\alpha})$ that commute with each other.

\medskip

\noindent{\bf Case (3):} If $r=-e-f+n$, then Lemma \ref{2nil} teaches us that $r_1=-e_1-f_1+n_1$ for some $e_1,f_1\in Id(R_1)$ and $n_1\in nil(R_1)$ which all commute, and that there exists $e_2\in Id(R_2)$ and $n_2\in nil(R_2)$ which commute such that $r_2=e_2+n_2$, whence $r_2=-e_2+n_2+2e_2$ and $n_2+2e_2\in nil(R_2)$. It follows now that $$(r,r_1,r_2)=-(e,e_1,e_2)-(f,f_1,0)+(n,n_1,n_2+2e_2),$$ and, easily, $(e,e_1,e_2),(f,f_1,0)\in Id(\prod_{\alpha} R_{\alpha})$ and $(n,n_1,n_2+2e_2)\in nil(\prod_{\alpha} R_{\alpha})$ that commute with each other, thus finishing the entire proof.
\end{proof}

As an immediate consequence, we derive:

\begin{corollary}\label{cor} Let $S$ be a strongly 2-nil-clean ring. Then, $R\times S$ is a weakly strongly 2-nil-clean ring if, and only if, so is $R$.
\end{corollary}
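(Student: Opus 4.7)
The plan is to obtain the corollary by applying Proposition \ref{2.2} to the two-element family $\{R, S\}$, together with the observation (noted right after Definition \ref{2.1}) that any strongly 2-nil-clean ring is automatically weakly strongly 2-nil-clean.

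For the necessity direction, I would simply note that $R$ is a homomorphic image of $R \times S$ via the canonical projection $R \times S \to R$. Then Proposition \ref{hom}(i), which asserts closure of our class under homomorphic images, immediately gives that $R$ is weakly strongly 2-nil-clean.

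For the sufficiency direction, I would verify the two hypotheses of Proposition \ref{2.2} in the case of the family $\{R, S\}$. Both factors are weakly strongly 2-nil-clean: $R$ by assumption, and $S$ because it is strongly 2-nil-clean. Furthermore, the ``at most one non-strongly-2-nil-clean factor'' condition holds because $S$ is, by hypothesis, strongly 2-nil-clean, so the only potentially exceptional factor is $R$. Proposition \ref{2.2} then yields that $R \times S$ is weakly strongly 2-nil-clean.

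There is essentially no obstacle once Proposition \ref{2.2} is available; the argument is a direct bookkeeping exercise. The only subtlety worth flagging is that the sufficiency direction makes genuine use of the strongly 2-nil-clean hypothesis on $S$, not merely weakly strongly 2-nil-clean: if $S$ were only weakly strongly 2-nil-clean (but not strongly 2-nil-clean), then $R \times S$ could fail to be weakly strongly 2-nil-clean, as illustrated by the example $\mathbb{Z}_5 \times \mathbb{Z}_5$ appearing in Proposition \ref{hom}(ii).
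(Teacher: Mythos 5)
Your proof is correct and matches the paper's intended argument exactly: the paper states this corollary as an immediate consequence of Proposition~\ref{2.2}, which is precisely how you derive the sufficiency direction, and the necessity direction via Proposition~\ref{hom}(i) is likewise the standard route. Your closing remark about why the strongly 2-nil-clean hypothesis on $S$ cannot be weakened, illustrated by $\mathbb{Z}_5\times\mathbb{Z}_5$, is a nice (and accurate) observation.
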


We now proceed by proving the following technical statement.

\begin{lemma}\label{6}
Let $R$ be a weakly strongly 2-nil-clean ring. Then, $30\in nil(R)$.
\end{lemma}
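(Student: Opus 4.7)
The plan is to apply the defining decomposition to the single element $a = 3 \in R$ and carry out a brief case analysis on the sign pattern to force $30$ to be nilpotent.

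Write $3 = \pm e \pm f + n$ with commuting $e, f \in Id(R)$ and $n \in nil(R)$. For any pair of commuting idempotents one verifies directly, using $e^{2}=e$, $f^{2}=f$, $ef=fe$, the two polynomial identities $(e+f)\bigl((e+f)-1\bigr)\bigl((e+f)-2\bigr) = 0$ and $(e-f)^{3} = e-f$. Accordingly, the four sign choices reduce to three essentially distinct cases, depending on whether $\pm e \pm f$ equals $e+f$, $-e-f$, or $\pm(e-f)$.

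In each case I substitute the given expression for $3$ into the appropriate identity and expand inside the central commutative subring $(\mathbb{Z}[n])\cdot 1_{R}$, in which every polynomial in $n$ with zero constant term is nilpotent. When $3 = e+f+n$, the identity $(3-n)(2-n)(1-n) = 0$ forces $6 \in nil(R)$. When $3 = -e-f+n$, the identity $(n-3)(n-4)(n-5) = 0$ forces $60 \in nil(R)$. When $3 = \pm(e-f)+n$, the identity $(3-n)^{3} = 3-n$ (valid for either sign choice, since $(-(3-n))^{3} = -(3-n)$ gives the same relation) forces $24 \in nil(R)$.

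It remains to check, using only integer identities in the commutative subring $\mathbb{Z}\cdot 1_{R}$, that each of the three conclusions implies $30 \in nil(R)$. If $6 \in nil(R)$, then $30 = 5\cdot 6 \in nil(R)$ at once. If $60 \in nil(R)$, the identity $30^{2} = 15 \cdot 60$ yields $(30^{2})^{m} = 15^{m} \cdot 60^{m} \in nil(R)$, whence $30 \in nil(R)$. If $24 \in nil(R)$, the identity $6^{3} = 9\cdot 24$ shows $6^{3} \in nil(R)$, hence $6 \in nil(R)$, and we are reduced to the first case. The main thing to get right is the verification of the two polynomial identities for commuting idempotents and the careful expansion that isolates the integer constant from the nilpotent remainder; beyond this bookkeeping, there is no real obstacle.
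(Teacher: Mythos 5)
Your proof is correct, and while it opens with the same move as the paper --- apply the defining decomposition to the element $3$ and split into cases by sign pattern --- the way you extract the arithmetic consequence in each case is genuinely different and cleaner. The paper isolates one idempotent, say $f = 3 - e - n$, imposes $f = f^2$, and grinds out relations such as $6 - 4e \in nil(R)$, $2e \in nil(R)$, $20e \in nil(R)$, etc., before recombining them to reach $6$ or $30$ nilpotent. You instead exploit the annihilating cubics $(e+f)\bigl((e+f)-1\bigr)\bigl((e+f)-2\bigr) = 0$ and $(e-f)^3 = e - f$ for commuting idempotents, substitute $e \pm f = \pm(3 - n)$, and read off an integer constant ($6$, $60$, or $24$) equal to a constant-free polynomial in the nilpotent $n$, hence nilpotent; your closing reductions $30^2 = 15\cdot 60$ and $6^3 = 9 \cdot 24$ are sound. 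Your route buys a uniform, nearly computation-free treatment of the cases and makes transparent why the constants $6$, $60$, $24$ appear (they are the values of the annihilating polynomials at $3$); incidentally, the paper's own Case 2 is stated sloppily (``multiplying by $20$'' literally yields $60 \in nil(R)$, not $30$), a gap your explicit step $30^2 = 15\cdot 60$ closes correctly.
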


\begin{proof} As $R$ is weakly strongly 2-nil-clean, there are elements $e,f\in Id(R)$ and $m\in nil(R)$ that commute with each other and $3=\pm e\pm f+n$.

Firstly, if $3=e+f+n$, then $f=3-e-n$. So, $3-e-n=(3-e-n)^2$ ensuring that $6-4e\in nil(R)$ and so $2e=e(6-4e)\in nil(R)$. Similarly, we can elementarily see that $2f\in nil(R)$. From this and $3\times 2=2e+2f+2n$, we get $6\in nil(R)$ and hence $30\in nil(R)$.

Next, if $3=-e-f+n$, then $f=-3-e+n$. So, $-3-e+n=(-3-e+n)^2$ assuring that $8e+12\in nil(R)$ and so $20e=e(8e+12)\in nil(R)$. Analogously, we get $20f\in nil(R)$. Multiplying the equation $3=-e-f+n$ by $20$, we obtain $30\in nil(R)$, as needed.

Finally, if $3=e-f+n$, then $f=e-3+n$ and hence $e-3+n=(e-3+n)^2$. It follows that $6e-12\in nil(R)$. From this and $6\times 3=6e-6f+6n$, we receive $6+6f\in nil(R)$ and so $12f=f(6+6f)\in nil(R)$. Since $6e-12\in nil(R)$, it must be that $6e=-e(6e-12)\in nil(R)$. Thus, $12\times 3=12e-12f+12n$ insures that $36\in nil(R)$. Therefore, $6\in nil(R)$ whence $30\in nil(R)$, as pursued.
\end{proof}

\begin{lemma}\label{j}
Let $R$ be a weakly strongly 2-nil-clean ring. Then, $J(R)$ is a nil-ideal.
\end{lemma}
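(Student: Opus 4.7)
The plan is, given $x \in J(R)$ with weakly strongly 2-nil-clean decomposition $x = \pm e \pm f + n$ from Definition \ref{2.1}, to show $x \in nil(R)$. Since $n \in nil(R) \subseteq J(R)$, the combination $\pm e \pm f = x - n$ lies in $J(R)$, so either $e + f \in J(R)$ or $e - f \in J(R)$. In both cases, right-multiplying this element by $1 - e$ produces (up to sign) the idempotent $(1-e)f$ inside $J(R)$, and multiplying by $1 - f$ similarly yields $(1-f)e \in J(R)$. Since the Jacobson radical contains no nonzero idempotent, both vanish, giving $f = ef$ and $e = ef$, hence $e = f$. Thus $x$ collapses either to $n$ (which is already nilpotent, so we are done) or to $\pm 2e + n$ with $2e \in J(R)$.

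The main obstacle, and the core of the argument, is the remaining case: to conclude that $2e \in nil(R)$ whenever $e$ is an idempotent and $2e \in J(R)$. For this I will invoke Lemma \ref{6} to get $30 \in nil(R)$, hence $30^k = 0$ for some $k \geq 1$. Choosing integers $a, b$ with $a \cdot 2^k + b \cdot 15^k = 1$, set $u = a \cdot 2^k \cdot 1_R$ and $v = b \cdot 15^k \cdot 1_R$. A routine check gives $u + v = 1$, $uv = ab \cdot 30^k = 0$, $u^2 = u$, $v^2 = v$, so $R \cong Ru \times Rv$. Moreover $2^k v = b \cdot 30^k = 0$ shows $2 \in nil(Rv)$, while the identity $u = a \cdot 2^k u$ exhibits $2u$ as a unit in $Ru$ (with inverse $2^{k-1} a u$).

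Finally, writing $e = eu + ev$ in this splitting, the hypothesis $2e \in J(R)$ projects to $2eu \in J(Ru)$ and $2ev \in J(Rv)$. In $Ru$, invertibility of $2$ forces $eu \in J(Ru)$; being an idempotent, $eu = 0$. In $Rv$, nilpotence of $2$ directly yields $2ev \in nil(Rv)$. Combining the two components, $2e = 2ev$ is nilpotent in $R$, and therefore $x = \pm 2e + n$ is a sum of two commuting nilpotents, hence itself nilpotent. This establishes $J(R) \subseteq nil(R)$, as claimed.
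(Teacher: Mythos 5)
Your plan is genuinely different from the paper's proof (which treats the three sign cases separately by squaring the rearranged equations and extracting units such as $1+j$ and $j^4-1$), and most of it is sound, but there is a real gap at the very first step: you assert $n\in nil(R)\subseteq J(R)$ in order to place $\pm e\pm f=x-n$ in $J(R)$. In a general noncommutative ring a nilpotent element need not lie in the Jacobson radical (e.g. $E_{12}\in M_2(\mathbb{Q})$ is nilpotent while $J(M_2(\mathbb{Q}))=0$), and at this point in the paper nothing has been established for weakly strongly 2-nil-clean rings that would give $nil(R)\subseteq J(R)$; that containment is essentially a byproduct of the structure theory this lemma is being used to build, so assuming it here is circular.

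Fortunately the gap is local and repairable without abandoning your approach. Instead of subtracting $n$ inside $J(R)$, multiply $x=\pm e\pm f+n$ on the right by $1-e$: this gives $\pm g=j'-m$ with $g:=(1-e)f$ idempotent, $j':=x(1-e)\in J(R)$, and $m:=n(1-e)$ nilpotent, where $g$, $j'$, $m$ pairwise commute. From $(g\mp j')^k=(\mp m)^k=0$ and the binomial expansion one gets $g\cdot u=-(\mp j')^k\in J(R)$ with $u=1+(\text{element of }J(R))$ a unit, hence $g\in J(R)$ and therefore $g=0$; the same computation with $1-f$ gives $e(1-f)=0$, so $e=f$ as you wanted. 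From there your reduction to the case $2e\in J(R)$ and the splitting $R\cong Ru\times Rv$ (with $2$ invertible in $Ru$ and nilpotent in $Rv$, obtained from $30\in nil(R)$ and B\'ezout) is correct, and is in fact a cleaner and more conceptual route than the paper's case-by-case manipulations.
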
	

\begin{proof} Assuming that $j\in J(R)$, there are elements $e,f\in Id(R)$ and $n\in nil(R)$ that commute with each other and $j=\pm e\pm f+n$. Henceforth, we break the proof into three distinguished cases:

\medskip

\noindent{\bf Case (1):} If $j=e+f+n$, then $j-e-n=f$ and so $(j-e-n)^2=f$. It follows that $j^2-2ej+e=f+m$, where $m=-n(n+2e+2j)\in nil(R)$. Hence, $j(j-2e)=f-e+m$ and, since $me=em$, $mf=fm$, we obtain that $$j^3(j-2e)^3=(j(j-2e))^3=f-e+k,$$ where $$k=m(3(e-f)^2+3m(e-f)+m^2)\in nil(R).$$ Thus, $j^3(j-2e)^3-j(j-2e)=k-m\in nil(R)$. This means that $j(j-2e)(j^2(j-2e)^2-1)\in nil(R)$ and, since $j^2(j-2e)^2-1\in U(R)$, we have $j(j-2e)\in nil(R)$. So, $f-e+m\in nil(R)$ and hence $f-e\in nil(R)$. Therefore, $f=e+q$, where $q\in nil(R)$ and $eq=qe$, $fq=qf$ and $qn=nq$. From this and $j=e+f+n$, we readily get $j=2e+q+n$. As $30\in nil(R$), Lemma \ref{6} can be applied to get that $j^5=32e+t$ for some $t\in nil(R)$. Consequently, $j^5=2+t$ whence $j^5-j=t-q-n\in nil(R)$. But, as $j^4-1\in U(R)$, we conclude $j\in nil(R)$, as asked.

\medskip

\noindent{\bf Case (2):} If $j=-e-f+n$, then $f=-j-e+n$ and so $$-j-e+n=(-j-e+n)^2=j^2+e+2je+m,$$ where $m=n^2-2en-2jn\in nil(R)$. It follows that $-j-2e-j^2-2je=m-n\in nil(R)$. So, $(1+j)(j+2e)\in nil(R)$. As $(1+j)\in U(R)$, we detect that $j+2e\in nil(R)$. Hence, $j=-2e+k$, where $k=j+2e\in nil(R)$ and, clearly, $ke=ek$. As $30\in nil(R)$, Lemma \ref{6} employs to get $j^5=-32e+q$, where $q\in nil(R)$ and $qe=eq$. Therefore, $j^5=-2e+q-30e$. So,  $$j(j^4-1)=j^5-j=q+30e-k\in nil(R).$$ But, as $j^4-1\in U(R)$, we infer $j\in nil(R)$, as asked.

\medskip

\noindent{\bf Case (3):} If $j=e-f+n$, then $e=j+f-n$ and thus $$j+f-n=(j+f-n)^2=j^2+f+2jf+m,$$ where $m=n^2-2fn-2jn\in nil(R)$. It follows that $j-j^2-2jf=m+n\in nil(R)$. Thus, we discover that $$-f(j-j^2-2jf)=j^2f+jf=(j+1)jf\in nil(R).$$ As $1+jf\in U(R)$, we know $jf\in nil(R)$. From this and $j-j^2=2jf+m+n\in nil(R)$, we derive $j(1-j) \in nil(R)$. Furthermore, since $1-j\in U(R)$, we extract $j\in nil(R)$. So, in either case, $J(R)\subseteq Nil(R)$, as asked for.
\end{proof}

Standardly, a ring $R$ is said to be {\it strongly $\pi$-regular}, provided that, for any $a \in R$, there exists $n \in \mathbb{N}$ such that $a^n \in a^{n+1}R$.

\medskip

The following claim is crucial.

\begin{lemma}\label{2.12}
Every weakly strongly 2-nil-clean ring is strongly $\pi$-regular.
\end{lemma}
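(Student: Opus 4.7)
The plan is to invoke Lemma~\ref{6}, which gives $30\in nil(R)$, and then split $R$ via the Chinese Remainder Theorem into three pieces corresponding to the primes $2$, $3$, $5$, treating each piece separately. Fix $k$ with $30^{k}=0$. Since $2,3,5$ are central, the two-sided ideals $2^{k}R,\ 3^{k}R,\ 5^{k}R$ are pairwise comaximal (in $\mathbb Z$, $\gcd(2^{k},3^{k})=1$, etc.) and intersect to $30^{k}R=0$. Hence $R\cong R_{1}\times R_{2}\times R_{3}$ with $R_{1}=R/2^{k}R$, $R_{2}=R/3^{k}R$, $R_{3}=R/5^{k}R$, and by Proposition~\ref{hom} each factor remains weakly strongly 2-nil-clean.

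In $R_{1}$ we have $2\in nil(R_{1})$, so Lemma~\ref{2} gives that $R_{1}$ is strongly nil-clean; writing $a=e+n$ with $en=ne$ yields $a^{2}-a=n(2e+n-1)\in nil(R_{1})$, hence $a^{N}(1-a)^{N}=0$ for large $N$, and since $(1-a)^{N}=1+a\,r(a)$ with $r\in\mathbb Z[x]$, we extract $a^{N}=-a^{N+1}r(a)\in a^{N+1}R_{1}$. In $R_{2}$ we have $3\in nil(R_{2})$, so by Lemma~\ref{3}, $R_{2}$ is strongly 2-nil-clean; thus $a^{3}-a\in nil(R_{2})$ and the same extraction applied to $a^{N}(a^{2}-1)^{N}=0$ (whose constant term $(-1)^{N}$ is a unit) gives $a^{N}\in a^{N+1}R_{2}$.

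The substantive case is $R_{3}$, where $5\in nil(R_{3})$ and consequently $2,3\in U(R_{3})$. For $a\in R_{3}$, expand $a=\pm e\pm f+n$ with $e,f$ commuting idempotents and $n$ a commuting nilpotent. A direct computation using $e^{2}=e$, $f^{2}=f$, $ef=fe$ shows that $b=e+f$ satisfies
\[
b(b-1)(b-2)=2ef(b-2)=0,
\]
that $b'=e-f$ satisfies ${b'}^{3}=b'$, and that $-(e+f)$ satisfies $(-b)\bigl((-b)+1\bigr)\bigl((-b)+2\bigr)=0$. Substituting $a=b_{\bullet}+n$ absorbs the discrepancy into a commuting nilpotent multiple of $n$, producing
\[
a\cdot q(a)\in nil(R_{3}),\qquad q(x)\in\{(x-1)(x-2),\ x^{2}-1,\ (x+1)(x+2)\},
\]
with $q(0)\in\{2,-1\}$, a unit in $R_{3}$. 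Then $(aq(a))^{m}=0$ for some $m$; writing $q(a)^{m}=q(0)^{m}+a\,s(a)$ yields $q(0)^{m}a^{m}=-a^{m+1}s(a)$, so $a^{m}\in a^{m+1}R_{3}$.

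Finally, strong $\pi$-regularity is closed under finite direct products (given witnesses $n_{i}$ in each factor, any common upper bound $N\geq n_{i}$ still satisfies $a_{i}^{N}\in a_{i}^{N+1}R_{i}$, so $(a_{1},a_{2},a_{3})^{N}\in(a_{1},a_{2},a_{3})^{N+1}R$). Hence $R\cong R_{1}\times R_{2}\times R_{3}$ is strongly $\pi$-regular. The anticipated main obstacle is the case analysis in $R_{3}$: one must verify the polynomial identity $a\cdot q(a)\in nil(R_{3})$ in each sign case and observe $q(0)\in U(R_{3})$—a step that ultimately rests on $2\in U(R_{3})$, which is furnished by $5\in nil(R_{3})$.
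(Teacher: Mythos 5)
Your proof is correct, but it takes a genuinely different route from the paper's. The paper argues uniformly: using $e^k=e$, $f^k=f$ and commutativity it expands $(\pm e\pm f+n)^5$ and observes that the mixed binomial coefficients sum to $\pm 30ef$, which is nilpotent by Lemma~\ref{6}; hence $a^5-a\in nil(R)$ for \emph{every} $a$, and since $a^4-1$ has unit constant term this immediately yields $a^t\in a^{t+1}R$. You instead use $30\in nil(R)$ only to split $R$ by the Chinese Remainder Theorem into $2$-, $3$- and $5$-primary factors, dispatch the first two via Lemmas~\ref{2} and~\ref{3} (getting the stronger local facts $a^2-a\in nil(R_1)$ and $a^3-a\in nil(R_2)$), and handle the $5$-part by the cubic identities $b(b-1)(b-2)=0$, $(e-f)^3=e-f$, and their negative, exploiting $2\in U(R_3)$ so that $q(0)$ is a unit. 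Your case analysis in $R_3$ is sound (the verification $b(b-1)=2ef$ and $2ef(e+f-2)=0$ checks out, and absorbing $n$ into a commuting nilpotent is legitimate), and the closure of strong $\pi$-regularity under finite products is handled correctly. What the paper's approach buys is brevity and a single clean polynomial identity $a^5-a\in nil(R)$ valid globally; what yours buys is modularity — it reuses the structural lemmas already established and in fact foreshadows the prime-by-prime decomposition that drives Proposition~\ref{three} and Theorem~\ref{maj}. Both are complete proofs.
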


\begin{proof} Let $R$ be a weakly strongly 2-nil-clean ring, and set $a \in R$. Then, there are $e,f\in Id(R)$ and $n\in nil(R)$ that commute each with other and $a=\pm e\pm f+n$. We now claim that $a^5-a\in nil(R)$.

In fact, if $a=e+f+n$, then one writes that $$a^5=e+5ef+10ef+10ef+5ef+f+m,$$ where $m\in nil(R)$. So, we have $a^5=e+f+m+30ef$ and, since $30\in nil(R)$, we get $a^5-a=m+30ef-n\in nil(R)$. If now $a=e-f+n$, then $$a^5=e-5ef+10ef-10ef+5ef-f+k,$$ where $k\in nil(R)$. So, $a^5=e-f+k$ and thus $a^5-a=k-n\in nil(R)$. If, finally, $a=-e-f+n$, we arrive at $a^5=-e-f-30ef+q$, where $q\in nil(R)$. Therefore, $a^5-a\in nil(R)$ in either case, as claimed. Hence, there exists $t \in \mathbb{N}$ such that $(a^5-a)^t = 0$. It now follows that $a^{t} = a^{t+1}r$ for some $r \in R$. Consequently, $R$ is a strongly $\pi$-regular ring, as asserted.
\end{proof}	

We continue our work by asking validity of the following useful criterion.

\begin{proposition}\label{three} Let $R$ be a ring. Then, $R$ is weakly strongly 2-nil-clean if, and only if, $30\in nil(R)$ and, either $a^3-a\in nil(R)$, or $a(a-1)(a-2)\in nil(R)$, or $a(a+1)(a+2)\in nil(R)$ for all $a\in R$.
\end{proposition}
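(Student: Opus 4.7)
For the forward direction, fix $a = \epsilon_1 e + \epsilon_2 f + n$ with commuting $e, f \in Id(R)$ and $n \in nil(R)$, where $\epsilon_i \in \{+1, -1\}$. Up to the symmetry of $e$ and $f$ there are three sign patterns. When $a = e + f + n$, a direct expansion using $(e+f)^2 = e + f + 2ef$ and $(e+f)^3 = e + f + 6ef$ yields $(e+f)(e+f-1)(e+f-2) = 0$, and since $n$ commutes with $e,f$ the difference $a(a-1)(a-2) - (e+f)(e+f-1)(e+f-2)$ lies in the nil-ideal of $\mathbb{Z}[a]$ generated by $n$, giving $a(a-1)(a-2) \in nil(R)$. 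Symmetrically, $a = -e - f + n$ gives $a(a+1)(a+2) \in nil(R)$, while $a = \pm(e - f) + n$ gives $a^3 - a \in nil(R)$ via $(e - f)^3 = e - f$. The assertion $30 \in nil(R)$ is Lemma~\ref{6}.

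For the backward direction, assume $30 \in nil(R)$ together with the three-case dichotomy. Since $30^k = 0$ for some $k$ and $\gcd(6^k, 5^k) = 1$ in $\mathbb{Z}$, a Bezout coefficient produces a central integer idempotent yielding a product decomposition $R = R_6 \times R_5$ with $6 \in nil(R_6)$ and $5 \in nil(R_5)$. The plan is to show $R_6$ is strongly $2$-nil-clean and $R_5$ is weakly strongly $2$-nil-clean, and then invoke Proposition~\ref{2.2}. Splitting further $R_6 = R_2 \times R_3$ with $2 \in nil(R_2)$, $3 \in nil(R_3)$: in $R_2$, each of the three polynomial conditions collapses modulo $nil(R_2)$ (using $a \pm 2 \equiv a$ and $a + 1 \equiv a - 1$) to one of $a^2(a - 1) \in nil(R_2)$ or $a(a - 1)^2 \in nil(R_2)$; squaring gives $(a(a-1))^2 \in nil(R_2)$, and since nilpotents form a radical in the commutative subring $\mathbb{Z}[a]$ we deduce $a(a - 1) \in nil(R_2)$, whence the standard idempotent lift makes $R_2$ strongly nil-clean. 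In $R_3$, each condition reduces to $a^3 \equiv a \pmod{nil(R_3)}$ (since $a \pm 2 \equiv a \mp 1$), so by the Chen–Sheibani characterization $R_3$ is strongly $2$-nil-clean. Hence $R_6$ is strongly $2$-nil-clean.

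For $R_5$, where $2, 3 \in U(R_5)$ and $5 \in nil(R_5)$, any of the three polynomials has three roots in $\{0, \pm 1, \pm 2\}$ whose pairwise differences are units; Chinese Remainder inside the commutative subring $\mathbb{Z}[a]$ therefore yields orthogonal idempotents (e.g.\ $e_0 + e_1 + e_2 = 1$ in the first case, with analogues in the others) on which $a$ takes the prescribed integer root-values modulo $nil$. Thus $a(a - 1)(a - 2) \in nil$ gives $a = (e_1 + e_2) + e_2 + n$; $a^3 - a \in nil$ gives $a = e_1 - e_{-1} + n$; and $a(a + 1)(a + 2) \in nil$ gives $a = -(e_{-1} + e_{-2}) - e_{-2} + n$. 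Hence $R_5$ is weakly strongly $2$-nil-clean, and Proposition~\ref{2.2} closes the argument. I expect the main technical obstacle to be the $R_2$-analysis: collapsing the three a priori different polynomial hypotheses into the single clean statement $a^2 - a \in nil$ requires carefully exploiting the commutativity of $\mathbb{Z}[a]$ and the nilpotency of $2$ to absorb the stray factors before the idempotent lift can be applied.
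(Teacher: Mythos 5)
Your proof is correct, and in the backward direction it takes a genuinely different route from the paper's. In the forward direction the two arguments are essentially the same computation in different clothing: you verify the exact identity $(e+f)(e+f-1)(e+f-2)=0$ and perturb by the commuting nilpotent $n$, while the paper reaches the same conclusion by noting that $1-a=\bigl((1-e)-f\bigr)-n$ is a tripotent plus a nilpotent. In the backward direction the paper splits $R\cong R_1\times R_2$ with $15\in nil(R_1)$ and $2\in nil(R_2)$, translates $a$ by $0$ or $\pm 1$ so that each polynomial hypothesis becomes $b^3-b\in nil$, and applies Zhou's tripotent-lifting lemma (\cite{zhou}, valid on $R_1$ since $2\in U(R_1)$) to write $b$ as a difference of commuting idempotents plus a nilpotent, assembling the weak expression coordinatewise by hand. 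You instead refine the splitting to the $2$-, $3$- and $5$-parts, show the $2$- and $3$-parts satisfy $a^2-a\in nil$ and $a^3-a\in nil$ respectively (so the $6$-part is strongly $2$-nil-clean by the Chen--Sheibani criterion), handle the $5$-part by a Chinese Remainder construction of orthogonal idempotents in $\mathbb{Z}[a]$ (only $2\in U$ is actually needed, since the pairwise root differences are $1$ and $2$), and then reassemble via Proposition~\ref{2.2}. Your version is longer but buys two things: it makes visible the structural dichotomy that later becomes Theorem~\ref{maj} (the $5$-part is the only factor that can fail to be strongly $2$-nil-clean), and it replaces the paper's somewhat awkward citation of Lemma~\ref{2} --- a statement whose hypothesis (weak strong $2$-nil-cleanness of $R_2$) has not yet been established at that point --- with a self-contained derivation of $a^2-a\in nil(R_2)$ from the polynomial conditions. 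The paper's route is shorter because the single tripotent lemma handles the $3$- and $5$-parts uniformly.
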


\begin{proof} Assuming that $R$ is weakly strongly 2-nil-clean, then Lemma \ref{6} is applicable to get that $30\in nil(R)$. Let $a\in R$. Then, $a=\pm e\pm f+n$, where $e,f\in Id(R)$ and $n\in nil(R)$ such that they commute with each other. Firstly, if $a=e-f+n$ or $a=-e+f+n$, then it is easy to see $a^3-a\in nil(R)$.

Next, if $a=e+f+n$, then $1-a=1-e-f-n$ and one can plainly check that $(1-a)^3-(1-a)\in nil(R)$. It now follows that $a^3-3a^2+2a\in nil(R)$ and so $a(a-1)(a-2)\in nil(R)$.

If now $a=-e-f+n$, then $1=a=1-e-f+n$ and thus $(1+a)^3-(1+a)\in nil(R)$ guaranteeing that $a^3+3a^2+2a\in nil(R)$.  Therefore, $a(a+1)(a+2)\in nil(R)$, as stated.

Conversely, assume that $a\in R$. As $30\in nil(R)$, we may decompose $R\simeq R_1\times R_2$, where $15\in nil(R_1)$ and $2\in nil(R_1)$. Then, we can write $a=(a_1,a_2)$, where $a_1\in R_1$ and $r_2\in R_2$.

If, first, $a^3-a\in nil(R)$, then we get $a_i^3-a_i\in nil(R)$ with $i=1,2$. As $2\in U(R_1)$,
\cite[Lemma 2.6]{zhou} insures that there exists $\alpha[t]\in \mathbb{Z}[t]$ such that $\alpha(a_1)^3=\alpha(a_1)$ and $a_1-\alpha(a_1)\in nil(R_1)$. Setting $e_1:=(\alpha(a_1)^2+\alpha(a_1))/2,f_1:=(\alpha(a_1)^2-\alpha(a_1))/2\in nil(R_1)$, we write $a_1=e_1-f_1+m_1$, where $m_1=a_1-\alpha(a_1)\in nil(R_1)$. Since $2\in nil(R)$, Lemma \ref{2} gives the existence of an idempotent $e_2\in R_2$ such that $a_2-e_2\in nil(R)$. It now follows that $a_2=e_2+m_2$, where $m_2 \in nil(R_2)$. Thus, $a=(e_1,e_2)-(f_1,0)+(m_1,m_2)$, as required.

If next $a(a-1)(a-2)\in nil(R)$, then we write $$(a-1)(a^2-2a)=(a-1)(a^2-2a+1-1)=(a-1)((a-1)^2-1)=(a-1)^3-(a-1)\in nil(R).$$ So, $(a_i-1)^3-(a_i-1)\in nil(R)$ for $i=1,2$. As $2\in U(R_1)$, again appealing to \cite[lemma 2.6]{zhou}, there exists $\alpha[t]\in \mathbb{Z}[t]$ such that $\alpha(a_1-1)^3=\alpha(a_1-1)$ and $a_1-1-\alpha(a_1-1)\in nil(R_1)$. Take $e_1:=(\alpha(a_1-1)^2+\alpha(a_1-1))/2,f_1:=(\alpha(a_1-1)^2-\alpha(a_1-1))/2\in nil(R_1)$, we write $a_1-1=e_1-f_1+m_1$, where $m_1=a_1-1-\alpha(a_1-1)\in nil(R_1)$. So, $a_1=e_1+1-f_1+m_1$. Since $2\in nil(R)$, owing to Lemma \ref{2}, there exists an idempotent $e_2\in R_2$ such that $a_2-e_2\in nil(R)$. It, therefore, follows that $a_2=e_2+m_2$, where $m_2 \in nil(R_2)$. Thus, $a=(e_1,e_2)+(1-f_1,0)+(m_1,m_2)$, as required.

If now $a(a+1)(a+2)\in nil(R)$, then one verifies that $$(a+1)(a^2+2a+1-1)=(a+1)^3-(a+1)\in nil(R).$$ So, $(a_i-1)^3-(a_i-1)\in nil(R)$ whenever $i=1,2$. As $2\in U(R_1)$, we once again may apply \cite[Lemma 2.6]{zhou} to get the existence of $\alpha[t]\in \mathbb{Z}[t]$ such that $\alpha(a_1+1)^3=\alpha(a_1+1)$ and $a_1+1-\alpha(a_1+1)\in nil(R_1)$. Putting $e_1:=(\alpha(a_1+1)^2+\alpha(a_1+1))/2,f_1:=(\alpha(a_1+1)^2-\alpha(a_1+1))/2\in nil(R_1)$, we write $a_1+1=e_1-f_1+m_1$, where $m_1=a_1+1-\alpha(a_1+1)\in nil(R_1)$. So, $a_1=-1+e_1-f_1+m_1$. Since $2\in nil(R)$, the application of Lemma \ref{2} leads to the existence of an idempotent $e_2\in R_2$ such that $a_2-e_2=a_2+e_2-2e_2\in nil(R)$. It, thereby, follows that $a_2=-e_2+m_2$, where $m_2 \in nil(R_2)$. Thus, $a=-(1-e_1,e_2)-(f_1,0)+(m_1,m_2)$, as required, completing the proof.
\end{proof}

The following necessary and sufficient condition is helpful.

\begin{proposition}\label{11} Let $I$ be a nil-ideal of $R$. Then, $R$ is weakly strongly 2-nil-clean if, and only if, $R/I$ is weakly strongly 2-nil-clean.
\end{proposition}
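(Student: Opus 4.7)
The plan is to reduce this to the polynomial characterization established in Proposition \ref{three}, rather than trying to lift the weakly strongly 2-nil-clean decomposition $\bar a=\pm e\pm f+n$ directly from $R/I$ to $R$. Lifting idempotents through a nil-ideal is available, but keeping track of the signs $\pm e\pm f$ uniformly across all elements of $R$ is awkward, since different elements may genuinely require different sign patterns, and one cannot in general pick a single ``type'' that works for every $a$.

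For the forward implication, note that $R/I$ is a homomorphic image of $R$, so Proposition \ref{hom}(i) immediately gives that $R/I$ is weakly strongly 2-nil-clean. This direction requires nothing about $I$ being nil.

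For the backward implication, suppose $R/I$ is weakly strongly 2-nil-clean. By Proposition \ref{three} applied inside $R/I$, we have $30+I\in nil(R/I)$ and, for every $a\in R$, at least one of the three elements
\[
a^3-a,\qquad a(a-1)(a-2),\qquad a(a+1)(a+2)
\]
maps into $nil(R/I)$. The crux is now the elementary observation that if $x+I\in nil(R/I)$ then $x\in nil(R)$: there is some $m$ with $x^m\in I$, and since $I$ is a nil-ideal $(x^m)^k=x^{mk}=0$ for some $k$. Applying this to the element $30$ yields $30\in nil(R)$; applying it to whichever of the three polynomial expressions is nilpotent mod $I$ shows that the same polynomial expression is nilpotent in $R$. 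Hence the hypotheses of Proposition \ref{three} are satisfied by $R$, so $R$ is weakly strongly 2-nil-clean.

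There is essentially no obstacle once one routes through Proposition \ref{three}; the only point worth stressing is why one should avoid a direct lifting argument. If one tries to lift $\bar a=\pm e\pm f+n$ by choosing lifts of $e,f$ and absorbing the error into a nilpotent, the three possible sign configurations of $(\pm,\pm)$ do not transform into one another under adjustment by nilpotents (this is precisely the kind of phenomenon Lemma \ref{5} was needed to handle). The polynomial characterization sidesteps this entirely because the three nilpotence conditions are preserved verbatim under lifting modulo a nil-ideal.
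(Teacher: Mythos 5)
Your proof is correct and follows essentially the same route as the paper: the forward direction via homomorphic images, and the backward direction by passing through the polynomial characterization of Proposition~\ref{three} and lifting nilpotency through the nil-ideal $I$. Your version is in fact slightly more careful than the paper's, since you spell out why $x+I\in nil(R/I)$ forces $x\in nil(R)$ and explicitly note that $30\in nil(R)$ must also be recovered.
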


\begin{proof} One implication is quite clear. Assume now that $R/I$ is weakly strongly 2-nil-clean. Let $a\in R$. Then, with Lemma \ref{three} in hand, we get either $a^3-a+I\in nil(R/I)$, or $a(a-1)(a-2)+I\in nil(R/I)$, or $a(a+1)(a+2)+I\in nil(R/I)$. As $I$ is a nil-ideal of $R$, we arrive at either $a^3-a\in nil(R)$, or $a(a-1)(a-2)\in nil(R)$, or $a(a+1)(a+2)\in nil(R)$ for all $a\in R$. Hence, $R$ is weakly strongly 2-nil-clean with the aid of Lemma~\ref{three}.
\end{proof}

As two pivotal reduction consequences, we derive:

\begin{corollary}\label{fac} Let $R$ be a ring. Then, $R$ is weakly strongly 2-nil-clean if, and only if, $J(R)$ is a nil-ideal and $R/J(R)$ is weakly strongly 2-nil-clean.
\end{corollary}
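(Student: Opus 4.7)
The plan is to observe that this corollary is essentially a direct combination of two results already established in the paper: Lemma~\ref{j}, which says that the Jacobson radical of a weakly strongly 2-nil-clean ring is nil, and Proposition~\ref{11}, which allows us to pass freely between $R$ and $R/I$ for a nil-ideal $I$.

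For the forward direction, I would assume $R$ is weakly strongly 2-nil-clean and immediately apply Lemma~\ref{j} to conclude that $J(R) \subseteq \operatorname{nil}(R)$, so $J(R)$ is a nil-ideal. The quotient $R/J(R)$ is then weakly strongly 2-nil-clean by Proposition~\ref{hom}(i), since it is a homomorphic image of $R$; alternatively, one may invoke the trivial implication in Proposition~\ref{11}.

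For the converse, I would suppose that $J(R)$ is a nil-ideal of $R$ and that $R/J(R)$ is weakly strongly 2-nil-clean. Taking $I := J(R)$ in Proposition~\ref{11}, the non-trivial implication of that proposition gives that $R$ itself is weakly strongly 2-nil-clean, which is exactly what we want.

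There is no real obstacle here, since all the machinery is already in place; the corollary is a bookkeeping statement assembling Lemma~\ref{j} and Proposition~\ref{11}. The only thing worth verifying is that the hypothesis ``$J(R)$ is a nil-ideal'' in the converse is genuinely used (to invoke Proposition~\ref{11}), and that in the forward direction both conclusions, the nilness of $J(R)$ and the weak strong 2-nil-cleanness of $R/J(R)$, are covered by the cited lemmas.
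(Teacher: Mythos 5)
Your proposal is correct and follows exactly the paper's own route: the paper proves this corollary by citing Lemma~\ref{j} together with Proposition~\ref{11}, which is precisely the combination you spell out. Nothing further is needed.
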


\begin{proof} It follows directly from Lemmas~ \ref{j} and \ref{11}.
\end{proof}

\begin{corollary}\label{10}
Let $I$ be an ideal of a ring $R$. Then, the following are equivalent:
\begin{enumerate}
\item
$R/I$ is weakly strongly 2-nil-clean.
\item
$R/I^n$	is weakly strongly 2-nil-clean for all $n \in \mathbb{N}$.
\item
$R/I^n$ is weakly strongly 2-nil-clean for some $n \in \mathbb{N}$.
\end{enumerate}	
\end{corollary}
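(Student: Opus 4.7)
The plan is to reduce everything to Proposition~\ref{11} by observing that $I/I^n$ is a nilpotent (hence nil) ideal of $R/I^n$ whose quotient is $R/I$. Concretely, for each $n\in\mathbb{N}$, set $S_n:=R/I^n$ and $J_n:=I/I^n$. Since $(I/I^n)^n \subseteq I^n/I^n = 0$, the ideal $J_n$ of $S_n$ is nilpotent, and in particular a nil-ideal. Moreover, by the standard third isomorphism theorem, $S_n/J_n = (R/I^n)/(I/I^n) \cong R/I$.

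Applying Proposition~\ref{11} to the ring $S_n$ with its nil-ideal $J_n$, one obtains
\[
R/I^n \text{ is weakly strongly 2-nil-clean} \;\Longleftrightarrow\; R/I \text{ is weakly strongly 2-nil-clean}.
\]
This equivalence, holding for every $n\in\mathbb{N}$, immediately yields the chain of implications (1) $\Rightarrow$ (2) $\Rightarrow$ (3) $\Rightarrow$ (1) among the three assertions. Indeed, (1) $\Leftrightarrow$ (2) follows by letting $n$ range arbitrarily; and (1) $\Leftrightarrow$ (3) follows by specializing to the given $n$.

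There is really no obstacle of substance here: the argument is a routine application of Proposition~\ref{11}, the only genuine input being the elementary nilpotency $(I/I^n)^n = 0$. I would therefore structure the proof as two lines, first isolating the above isomorphism and nilpotency observation, and then invoking Proposition~\ref{11} once to conclude the three-way equivalence.
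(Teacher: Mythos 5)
Your argument is correct and is essentially identical to the paper's proof: both identify $I/I^n$ as a nil (indeed nilpotent) ideal of $R/I^n$ with quotient isomorphic to $R/I$, and then invoke Proposition~\ref{11} to get the equivalence for each $n$. No further comment is needed.
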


\begin{proof}
(i) $\Longrightarrow$ (ii). For any $n \in \mathbb{N}$, one knows that $$\dfrac{R/I^n}{I/I^n} \cong R/I.$$ Since $I/I^n$ is a nil-ideal of $R/I^n$ and $R/I$ is weakly strongly 2-nil-clean, then Proposition \ref {11} allows us to infer that $R/I^n$ is a weakly strongly 2-nil-clean ring.

(ii) $\Longrightarrow$ (iii). This is pretty trivial.

(iii) $\Longrightarrow$ (i). For any ideal $I$ of $R$, one has that $$\dfrac{R/I^n}{I/I^n} \cong R/I.$$ So, invoking Proposition \ref{11}, we conclude that $R/I$ is weakly strongly 2-nil-clean.
\end{proof}

We are now able to prove the following.

\begin{proposition}\label{12}
Let $R$ be a ring. Then, the following are equivalent:
\begin{enumerate}
\item
$R$ is strongly 2-nil-clean.
\item
${\rm T}_{n}(R)$ is strongly 2-nil-clean for all $n\in \mathbb{N}$.
\item
${\rm T}_n(R)$ is weakly strongly 2-nil-clean for some $n\geq 2$.
\end{enumerate}
\end{proposition}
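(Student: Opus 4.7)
The plan is to prove the cycle $(1) \Rightarrow (2) \Rightarrow (3) \Rightarrow (1)$, with the substantive work concentrated in the last implication. For $(1) \Rightarrow (2)$, I would invoke the Chen--Sheibani criterion that $R$ is strongly 2-nil-clean exactly when $a - a^3 \in nil(R)$ for every $a \in R$. Given $A \in T_n(R)$, the diagonal entries of $A - A^3$ are of the form $a_{ii} - a_{ii}^3 \in nil(R)$, so $A - A^3$ is an upper triangular matrix with nilpotent diagonal, hence itself nilpotent in $T_n(R)$. The implication $(2) \Rightarrow (3)$ requires no argument beyond observing that strongly 2-nil-clean rings are weakly strongly 2-nil-clean and specializing $n = 2$.

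For $(3) \Rightarrow (1)$, assume $T_n(R)$ is weakly strongly 2-nil-clean for some $n \geq 2$. I would first apply Lemma~\ref{j} to conclude that $J(T_n(R))$ is a nil ideal, and then use Corollary~\ref{fac} to obtain that $T_n(R)/J(T_n(R))$ is again weakly strongly 2-nil-clean. Exploiting the standard description $J(T_n(R)) = \{A \in T_n(R) : a_{ii} \in J(R)\}$, I get the canonical isomorphism
\[
T_n(R)/J(T_n(R)) \;\cong\; (R/J(R))^n.
\]
Proposition~\ref{2.2} applied to this finite product of $n \geq 2$ identical factors then forces $R/J(R)$ to be strongly 2-nil-clean: at most one factor in a weakly strongly 2-nil-clean product is allowed to fail strong 2-nil-cleanness, but among identical copies with $n \geq 2$, failure of one copy would mean failure of all, which is inadmissible.

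It remains to transfer this conclusion from $R/J(R)$ to $R$. I would note that the diagonal embedding $a \mapsto aI_n$ carries $J(R)$ into $J(T_n(R))$, which is nil, so $J(R)$ is itself nil. Since the characterizing property $a^3 - a \in nil(\cdot)$ for strong 2-nil-cleanness lifts through nil ideals, combining the strong 2-nil-cleanness of $R/J(R)$ with the nilpotence of $J(R)$ yields $a^3 - a \in nil(R)$ for every $a \in R$, and hence $R$ is strongly 2-nil-clean.

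The main obstacle, modest as it is, sits in the last step of $(3) \Rightarrow (1)$: one must spot that Proposition~\ref{2.2}'s ``at most one factor can fail'' conclusion upgrades to ``no factor can fail'' precisely because the $n \geq 2$ factors are identical copies of a single ring. This is the exact place where the hypothesis $n \geq 2$ cannot be relaxed to $n \geq 1$, since for $n = 1$ the upper triangular ring equals $R$ and the implication would become the tautology (3)$\Leftrightarrow$(3).
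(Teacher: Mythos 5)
Your proof is correct, and on the crucial implication $(3) \Rightarrow (1)$ it follows essentially the same route as the paper: pass to $T_n(R)/J(T_n(R)) \cong (R/J(R))^n$ via Corollary~\ref{fac}, note that a weakly strongly 2-nil-clean product of $n \geq 2$ identical factors forces every factor to be strongly 2-nil-clean, and lift back through the nil radical. You are in fact more precise than the paper at the key step: the ``at most one factor may fail, so identical factors with $n\geq 2$ all succeed'' argument is exactly an application of Proposition~\ref{2.2}, which is the correct citation (the paper writes ``using again Corollary~\ref{fac},'' which by itself does not yield strong 2-nil-cleanness of the factors). Where you genuinely diverge is $(1) \Rightarrow (2)$: the paper again routes through the Jacobson radical, showing $\prod_{i=1}^n R/J(R)$ is strongly 2-nil-clean and lifting modulo the nil ideal $J(T_n(R))$, whereas you verify the Chen--Sheibani criterion $A - A^3 \in nil(T_n(R))$ directly, using that an upper triangular matrix with nilpotent diagonal entries is nilpotent (its image in $T_n(R)/I \cong R^n$, with $I$ the strictly upper triangular ideal satisfying $I^n = 0$, is nilpotent, so some power of the matrix lands in $I$). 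Your version of this direction is more elementary and self-contained; both arguments are valid, and your explicit justification that $J(R)$ is nil via the diagonal embedding into $J(T_n(R))$ closes a step the paper leaves implicit.
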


\begin{proof}
(ii) $\Rightarrow$ (iii). This is elementary.

(iii) $\Rightarrow$ (i). From Corollary \ref{fac}, $J(T_n(R))$ is a nil-ideal of $T_n(R)$ and $T_n(R)/J(T_n(R))$ is weakly strongly 2-nil-clean. But, as $$T_n(R)/J(T_n(R))\simeq \prod_{i=1}^n R/J(R),$$ then using again Corollary \ref{fac}, the quotient-ring $R/J(R)$ is strongly 2-nil-clean. Since $J(R)$ is a nil-ideal of $R$, we obtain that $R$ is strongly 2-nil-clean in virtue of \cite{chen}.

(i) $\Rightarrow$ (ii). It follows from Lemma~\ref{j} that $J(R)$ is a nil-ideal of $R$ and $\prod_{i=1}^n R/J(R)$ is strongly 2-nil-clean. However, since $$T_n(R)/J(T_n(R)) \cong \prod_{i=1}^nR/J(R),$$ we have $T_n(R)/J(T_n(R))$ is strongly 2-nil-clean. Moreover, since $J(T_n(R))$ is nil, we see that $T_n(R)$ is strongly 2-nil-clean, and so weakly strongly 2-nil-clean, as formulated.
\end{proof}

Let $R$ be a ring and $M$ a bi-module over $R$. The trivial extension of $R$ and $M$ is defined as
\[ T(R, M) = \{(r, m) : r \in R \text{ and } m \in M\}, \]
with addition defined component-wise and multiplication defined by
\[ (r, m)(s, n) = (rs, rn + ms). \]

We now are ready to establish the following.

\begin{corollary}\label{2.17}
Let $R$ be a ring and $M$ a bi-module over $R$. Then, the following two items are equivalent:
\begin{enumerate}
\item
$T(R, M)$ is a weakly strongly 2-nil-clean ring.
\item
$R$ is a weakly strongly 2-nil-clean ring.
\end{enumerate}
\end{corollary}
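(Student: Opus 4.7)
The plan is to exhibit a natural nil-ideal $I$ of $T(R,M)$ whose factor ring is isomorphic to $R$, and then simply invoke Proposition~\ref{11}. Specifically, I would take $I = \{(0,m) : m \in M\}$.

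First, I would verify that $I$ is a two-sided ideal of $T(R,M)$. Using the multiplication rule $(r,n)(0,m) = (0,rm)$ and $(0,m)(r,n) = (0,mr)$, we see that $I$ is closed under left and right multiplication by elements of $T(R,M)$, and it is obviously closed under addition. Next, I would observe that $I^{2}=0$, because for any $(0,m),(0,n) \in I$ we have $(0,m)(0,n) = (0,0)$ from the defining multiplication. Hence $I$ is a (square-zero, thus) nil-ideal of $T(R,M)$.

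Then I would point out that the projection $\pi : T(R,M) \to R$ given by $(r,m) \mapsto r$ is a surjective ring homomorphism with kernel precisely $I$, so that $T(R,M)/I \cong R$. Applying Proposition~\ref{11} to the nil-ideal $I$ yields that $T(R,M)$ is weakly strongly 2-nil-clean if and only if $T(R,M)/I \cong R$ is weakly strongly 2-nil-clean, which is exactly the desired equivalence.

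There is no real obstacle here; the entire argument is a routine reduction to Proposition~\ref{11}, and the only points requiring attention are the straightforward verifications that $I$ is an ideal and that $I^{2}=0$. Both are immediate from the definition of multiplication in the trivial extension.
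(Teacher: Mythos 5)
Your proof is correct and follows exactly the same route as the paper: both take the square-zero ideal $I = T(0,M)$, note that $T(R,M)/I \cong R$, and invoke Proposition~\ref{11}. Your version merely spells out the verifications that the paper leaves to the reader.
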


\begin{proof}
Set $A:={\rm T}(R, M)$ and consider $I:={\rm T}(0, M)$. It is not too hard to verify that $I$ is a nil-ideal of $A$ such that $\dfrac{A}{I} \cong R$. So, the result follows immediately from Proposition \ref{11}, as needed.
\end{proof}

Let $\alpha$ be an endomorphism of a ring $R$ and $n$ a positive integer. It was defined by Nasr-Isfahani in \cite{3} the {\it skew triangular matrix ring} like this:
$${\rm T}_{n}(R,\alpha )=\left\{ \left. \begin{pmatrix}
	a_{0} & a_{1} & a_{2} & \cdots & a_{n-1} \\
	0 & a_{0} & a_{1} & \cdots & a_{n-2} \\
	0 & 0 & a_{0} & \cdots & a_{n-3} \\
	\ddots & \ddots & \ddots & \vdots & \ddots \\
	0 & 0 & 0 & \cdots & a_{0}
\end{pmatrix} \right| a_{i}\in R \right\}$$
with addition point-wise and multiplication given by:
\begin{align*}
&\begin{pmatrix}
		a_{0} & a_{1} & a_{2} & \cdots & a_{n-1} \\
		0 & a_{0} & a_{1} & \cdots & a_{n-2} \\
		0 & 0 & a_{0} & \cdots & a_{n-3} \\
		\ddots & \ddots & \ddots & \vdots & \ddots \\
		0 & 0 & 0 & \cdots & a_{0}
	\end{pmatrix}\begin{pmatrix}
		b_{0} & b_{1} & b_{2} & \cdots & b_{n-1} \\
		0 & b_{0} & b_{1} & \cdots & b_{n-2} \\
		0 & 0 & b_{0} & \cdots & b_{n-3} \\
		\ddots & \ddots & \ddots & \vdots & \ddots \\
		0 & 0 & 0 & \cdots & b_{0}
	\end{pmatrix}  =\\
	& \begin{pmatrix}
		c_{0} & c_{1} & c_{2} & \cdots & c_{n-1} \\
		0 & c_{0} & c_{1} & \cdots & c_{n-2} \\
		0 & 0 & c_{0} & \cdots & c_{n-3} \\
		\ddots & \ddots & \ddots & \vdots & \ddots \\
		0 & 0 & 0 & \cdots & c_{0}
\end{pmatrix},
\end{align*}
where $$c_{i}=a_{0}\alpha^{0}(b_{i})+a_{1}\alpha^{1}(b_{i-1})+\cdots +a_{i}\alpha^{i}(b_{0}),~~ 1\leq i\leq n-1
.$$ We designate the elements of ${\rm T}_{n}(R, \alpha)$ by $(a_{0},a_{1},\ldots , a_{n-1})$. If $\alpha $ is the identity endomorphism, then ${\rm T}_{n}(R,\alpha)$ is a subring of upper triangular matrix ring ${\rm T}_{n}(R)$.

\medskip

We are now in a position to show that the following is fulfilled.

\begin{corollary}\label{2.20}
Let $R$ be a ring. Then, the following two issues are equivalent:
\begin{enumerate}
\item
${\rm T}_{n}(R,\alpha)$ is a weakly strongly 2-nil-clean ring.
\item
$R$ is a weakly strongly 2-nil-clean ring.
\end{enumerate}
\end{corollary}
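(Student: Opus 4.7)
The plan is to mimic the strategy of Corollary~\ref{2.17}: exhibit a nil-ideal $I$ of $A := {\rm T}_n(R,\alpha)$ such that $A/I \cong R$, and then invoke Proposition~\ref{11} to get the equivalence for free.

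First I would introduce the natural candidate
\[
I = \{(0, a_1, a_2, \ldots, a_{n-1}) : a_i \in R\} \subseteq {\rm T}_n(R,\alpha),
\]
i.e.\ the set of skew triangular matrices with zero on the main diagonal. The additive closure is transparent, and a quick inspection of the multiplication rule defining ${\rm T}_n(R,\alpha)$ shows that if either factor has $a_0 = 0$, then the resulting $c_0 = a_0 \alpha^0(b_0) = 0$, so $I$ is closed under both left and right multiplication by arbitrary elements of $A$; hence $I$ is a two-sided ideal.

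Next I would verify that $I$ is a nil-ideal, and in fact nilpotent. Writing out the formula $c_i = \sum_{j=0}^{i} a_j \alpha^j(b_{i-j})$, one sees that if two factors each live in $I$ (so $a_0 = b_0 = 0$), then $c_0 = c_1 = 0$; more generally, a product of $k$ elements of $I$ has zeros in positions $0, 1, \ldots, k-1$. Consequently $I^n = 0$, which certainly makes $I$ a nil-ideal of $A$.

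The map $\pi : A \to R$ sending $(a_0, a_1, \ldots, a_{n-1}) \mapsto a_0$ is evidently additive; it is multiplicative because, in the product formula, $c_0 = a_0 b_0$ (the $\alpha^0$ is the identity). It is surjective with kernel exactly $I$, giving $A/I \cong R$. Applying Proposition~\ref{11} to the nil-ideal $I$ of $A$ immediately yields: $A = {\rm T}_n(R,\alpha)$ is weakly strongly 2-nil-clean if and only if $R$ is weakly strongly 2-nil-clean, as desired.

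The only mildly delicate step is checking $I^n = 0$ carefully in the skew setting — one has to track how the twists $\alpha^j$ interact with the entries — but since the multiplication only shifts entries to strictly higher superdiagonals when $a_0 = b_0 = 0$, no real obstacle arises; everything else is routine bookkeeping. No analogue of the case analysis needed in Proposition~\ref{2.2} is required, because the factorization here goes through a single quotient by a nil-ideal rather than a direct product decomposition.
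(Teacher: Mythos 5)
Your proof is correct and follows essentially the same route as the paper: both identify the strictly upper triangular part $I$ as a nilpotent ideal with $I^n=(0)$ and ${\rm T}_n(R,\alpha)/I\cong R$, then invoke Proposition~\ref{11}. Your write-up merely supplies the details (ideal closure, the induction giving $I^n=0$, and the projection onto the diagonal entry) that the paper leaves to the reader.
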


\begin{proof}
Set
$$I:=\left\{
\left.
\begin{pmatrix}
		0 & a_{12} & \ldots & a_{1n} \\
		0 & 0 & \ldots & a_{2n} \\
		\vdots & \vdots & \ddots & \vdots \\
		0 & 0 & \ldots & 0
	\end{pmatrix} \right| a_{ij}\in R \quad (i\leq j )
	\right\}.$$
Then, one readily inspects that $I^{n}=(0)$ as well as that $$\dfrac{{\rm T}_{n}(R,\alpha )}{I} \cong R.$$ Consequently, Proposition \ref{11} works to obtain the pursued conclusion.
\end{proof}

Let $\alpha$ be an endomorphism of a ring $R$. We design by $R[x,\alpha ]$ the {\it skew polynomial ring} whose elements are the polynomials over $R$ as the addition is defined traditionally, and the multiplication is defined by the equality $xr=\alpha (r)x$ for any $r\in R$. Thus, there is an obvious ring isomorphism $$\varphi : \dfrac{R[x,\alpha]}{\langle x^{n}\rangle }\rightarrow {\rm T}_{n}(R,\alpha),$$ given by $$\varphi (a_{0}+a_{1}x+\ldots +a_{n-1}x^{n-1}+\langle x^{n} \rangle )=(a_{0},a_{1},\ldots ,a_{n-1})$$ with $a_{i}\in R$, $0\leq i\leq n-1$. So, one detects that there is an isomorphism $${\rm T}_{n}(R,\alpha )\cong \dfrac{R[x,\alpha ]}{\langle  x^{n}\rangle}$$ of rings, where $\langle x^{n}\rangle$ is the ideal generated by $x^{n}$. 

Likewise, let us denote by $R[[x, \alpha]]$ the ring of {\it skew formal power series} over $R$; that is, all formal power series of $x$ with coefficients from $R$ with multiplication defined by $xr = \alpha(r)x$ for all $r \in R$. 

On the other hand, we also know that $$\dfrac{R[x,\alpha ]}{\langle x^{n}\rangle}\cong \dfrac{R[[x,\alpha ]]}{\langle x^{n}\rangle}.$$

We are now prepared to extract the following two consecutive statements.

\begin{corollary}\label{2.21}
Let $R$ be a ring with an endomorphism $\alpha$ such that $\alpha (1)=1$. Then, the following three assertions are equivalent:
\begin{enumerate}
\item
$R$ is a weakly strongly 2-nil-clean ring.
\item
$\dfrac{R[x,\alpha ]}{\langle x^{n}\rangle }$ is a weakly strongly 2-nil-clean ring.
\item
$\dfrac{R[[x,\alpha ]]}{\langle x^{n}\rangle }$ is a weakly strongly 2-nil-clean ring.
\end{enumerate}
\end{corollary}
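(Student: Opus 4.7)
The plan is to reduce Corollary~\ref{2.21} entirely to Corollary~\ref{2.20} by invoking the ring isomorphisms that the authors have just recorded immediately before the statement. No new combinatorial work on weakly strongly 2-nil-clean decompositions will be required; the point is that the property in question is preserved and reflected by ring isomorphisms, so only the algebraic identification of the quotients with a skew triangular matrix ring is needed.

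More precisely, I would first recall the isomorphism
\[
\varphi : \dfrac{R[x,\alpha]}{\langle x^{n}\rangle } \longrightarrow {\rm T}_{n}(R,\alpha),
\qquad \varphi\bigl(a_{0}+a_{1}x+\cdots+a_{n-1}x^{n-1}+\langle x^{n}\rangle\bigr)=(a_{0},a_{1},\ldots,a_{n-1}),
\]
which is a well-defined ring isomorphism precisely because $\alpha(1)=1$ (this hypothesis ensures that $\varphi$ sends the identity to the identity and that the multiplication rule $xr=\alpha(r)x$ lines up with the definition of multiplication in ${\rm T}_{n}(R,\alpha)$). Since being weakly strongly 2-nil-clean is, by Definition~\ref{2.1}, an invariant of the ring structure alone, this isomorphism gives the equivalence (1) $\Longleftrightarrow$ (2) immediately once we apply Corollary~\ref{2.20}.

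Next, for (2) $\Longleftrightarrow$ (3), I would use the isomorphism
\[
\dfrac{R[x,\alpha]}{\langle x^{n}\rangle} \;\cong\; \dfrac{R[[x,\alpha]]}{\langle x^{n}\rangle}
\]
recorded in the paragraph preceding the statement; this isomorphism is induced by the natural inclusion $R[x,\alpha]\hookrightarrow R[[x,\alpha]]$, which becomes a bijection modulo $\langle x^{n}\rangle$ because every element of the formal power series ring is congruent modulo $x^n$ to its truncation at degree $n-1$. Again, ring isomorphism transfers the weakly strongly 2-nil-clean property in both directions.

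I expect no real obstacle: the only subtlety is to make sure the hypothesis $\alpha(1)=1$ is genuinely used in setting up $\varphi$ (so that $\varphi$ is a unital ring homomorphism and the multiplication formulas on both sides agree), and to note that the isomorphism between the two quotients requires one to interpret $\langle x^n\rangle$ correctly in each ring. Once these points are in place, the chain (1) $\Longleftrightarrow$ (2) via Corollary~\ref{2.20} and $\varphi$, and (2) $\Longleftrightarrow$ (3) via the truncation isomorphism, closes the proof in a single line.
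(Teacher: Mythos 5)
Your proposal is correct and follows exactly the route the paper intends: the corollary is stated immediately after the isomorphisms $\mathrm{T}_{n}(R,\alpha)\cong R[x,\alpha]/\langle x^{n}\rangle\cong R[[x,\alpha]]/\langle x^{n}\rangle$ are recorded, and the implicit proof is precisely to transfer the property along these isomorphisms and invoke Corollary~\ref{2.20}. No further comment is needed.
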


\begin{corollary}
Let $R$ be a ring. Then, the following three claims are equivalent:
\begin{enumerate}
\item
$R$ is a weakly strongly 2-nil-clean ring.
\item
$\dfrac{R[x]}{\langle x^{n}\rangle }$ is a weakly strongly 2-nil-clean ring.
\item
$\dfrac{R[[x]]}{\langle x^{n}\rangle }$ is a weakly strongly 2-nil-clean ring.
\end{enumerate}
\end{corollary}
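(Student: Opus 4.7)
The plan is to derive this corollary as an immediate specialization of Corollary~\ref{2.21} by taking $\alpha$ to be the identity endomorphism $\mathrm{id}_R$ of $R$.

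First I would verify that $\mathrm{id}_R$ is an endomorphism of $R$ satisfying $\mathrm{id}_R(1)=1$, so that the hypothesis of Corollary~\ref{2.21} is met. Next I would unwind the definitions: under the twisting relation $xr=\alpha(r)x$ with $\alpha=\mathrm{id}_R$, one simply has $xr=rx$, so the skew polynomial ring $R[x,\mathrm{id}_R]$ coincides with the usual polynomial ring $R[x]$, and likewise $R[[x,\mathrm{id}_R]]$ coincides with the ordinary formal power series ring $R[[x]]$. Consequently the quotients $R[x,\mathrm{id}_R]/\langle x^n\rangle$ and $R[[x,\mathrm{id}_R]]/\langle x^n\rangle$ are identical to $R[x]/\langle x^n\rangle$ and $R[[x]]/\langle x^n\rangle$, respectively.

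Applying Corollary~\ref{2.21} with $\alpha=\mathrm{id}_R$ then yields the three-way equivalence in the statement. There is no genuine obstacle here: the substantive content has already been absorbed into Corollary~\ref{2.20}, where the ideal generated by $x$ in the truncation is seen to be nilpotent with quotient isomorphic to $R$, so that Proposition~\ref{11} on nil-ideal quotients applies. In this sense the present corollary is purely notational---dropping the twisting $\alpha$ recovers the classical polynomial and power series constructions, and the weak strong 2-nil-clean property transfers verbatim.
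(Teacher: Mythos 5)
Your proposal is correct and matches the paper's intent exactly: the paper states this corollary without proof immediately after Corollary~\ref{2.21}, clearly as the specialization to the identity endomorphism, which is precisely what you do. Your observation that $R[x,\mathrm{id}_R]=R[x]$ and $R[[x,\mathrm{id}_R]]=R[[x]]$, so the truncated quotients coincide, is all that is needed.
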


We are managed now to prove the following assertion on corner subrings. 

\begin{lemma}\label{2.14}
Let $R$ be a ring and $0\neq e=e^2\in R$. If $R$ is a weakly strongly 2-nil-clean ring, then so is $eRe$.
\end{lemma}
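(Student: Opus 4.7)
The plan is to apply the characterization furnished by Proposition~\ref{three}, namely that a ring $S$ is weakly strongly 2-nil-clean if and only if $30 \in nil(S)$ and, for every $a \in S$, at least one of $a^{3}-a$, $a(a-1_S)(a-2\cdot 1_S)$, or $a(a+1_S)(a+2\cdot 1_S)$ belongs to $nil(S)$. I will verify both conditions for the corner ring $eRe$, whose multiplicative identity is $e$.

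The first ingredient is immediate: by Lemma~\ref{6} we have $30 \in nil(R)$, and inside $eRe$ the integer $30$ is represented by $30e$, which satisfies $(30e)^k = 30^{k}e = 0$ for large $k$. Hence $30 \in nil(eRe)$.

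For the second ingredient, fix $a \in eRe$, so that $ea = ae = a$ and $a^{n} \in eRe$ for all $n$. A direct computation gives
$$a(a-e)(a-2e) \;=\; a^{3}-3a^{2}+2a \;=\; a(a-1_R)(a-2\cdot 1_R),$$
and analogously $a(a+e)(a+2e) = a(a+1_R)(a+2\cdot 1_R)$; of course $a^{3}-a$ is literally the same element in both contexts. Applying Proposition~\ref{three} to $R$ and the element $a \in R$ tells us that at least one of the three right-hand expressions above lies in $nil(R)$. Each such expression is a polynomial in $a$ without constant term and therefore already lies in $eRe$, so it lies in $nil(R)\cap eRe = nil(eRe)$. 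This gives exactly the condition that Proposition~\ref{three} demands of $eRe$.

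Combining the two ingredients and invoking the converse half of Proposition~\ref{three} concludes the argument. There is no substantial obstacle; the only delicate point is the bookkeeping between the identity $1_R$ of $R$ and the identity $e$ of $eRe$, which is cleanly resolved by the identity $ea = ae = a$ valid on $eRe$ and ensures that the three relevant polynomial conditions transfer verbatim from $R$ down to the corner.
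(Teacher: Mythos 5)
Your proposal is correct and follows essentially the same route as the paper: pass to the characterization of Proposition~\ref{three}, observe that the three polynomial expressions have no constant term and hence land in $nil(R)\cap eRe\subseteq nil(eRe)$, and conclude. In fact you are slightly more careful than the paper's own proof, since you explicitly check that $30\in nil(eRe)$ and address the mismatch between $1_R$ and the identity $e$ of the corner ring.
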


\begin{proof}
Choose $a \in eRe$. Thus, $a\in R$ and, thanks to Lemma \ref{three}, we have either  $a^3-a\in nil(R)$, or $a(a-1)(a-2)\in nil(R)$, or $a(a+1)(a+2)\in nil(R)$. As $a = ea = ae = eae$, we get $$a^3-a\in nil(R)\cap eRe\subseteq nil(eRe),$$ or $$a(a-1)(a-2)\in nil(R)\cap eRe\subseteq nil(eRe),$$ or $$a(a+1)(a+2)\in nil(R)\cap eRe\subseteq nil(eRe).$$  Hence, by Proposition \ref{three}, we deduce that $eRe$ is weakly strongly 2-nil-clean, as expected.
\end{proof}

The next criterion somewhat describes up to an isomorphism a partial variant of the weak strong 2-nil-cleanness.

\begin{lemma}\label{5} Let $R$ be a ring that $5\in nil(R)$. Then, $R$ is weakly strongly 2-nil-clean if, and only if, $R\simeq\mathbb{Z}_{5^k}$ for some positive integer $k$.
\end{lemma}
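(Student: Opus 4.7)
For the easy direction, if $R\simeq\mathbb{Z}_{5^k}$ then $5R$ is a nil ideal (its $k$-th power vanishes) and the quotient $R/5R\simeq\mathbb{Z}_5$ is weakly strongly 2-nil-clean by Lemma~\ref{1}, so Proposition~\ref{11} promotes the property back to $R$.

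For the converse, assume $R$ is weakly strongly 2-nil-clean with $5\in nil(R)$. My plan proceeds in two stages. In the first, I aim to reduce to the case in which $R$ is a local ring with residue field $\mathbb{F}_5$. From the computation in the proof of Lemma~\ref{2.12}, $a^5-a\in nil(R)$ for every $a\in R$; together with $J(R)$ being nil (Lemma~\ref{j}) and $R$ being strongly $\pi$-regular (Lemma~\ref{2.12}), standard structure theory writes $R/J(R)$ as a subdirect product of simple Artinian rings $M_{n_\alpha}(D_\alpha)$. Testing diagonal matrices $E_{11}+d\cdot E_{22}$ against the identity $a^5-a\in nil$ forces $d^5=d$ in every division ring $D_\alpha$, so $D_\alpha=\mathbb{F}_5$; testing a non-diagonalizable matrix such as $\bigl(\begin{smallmatrix}0&-2\\1&0\end{smallmatrix}\bigr)$ in $M_2(\mathbb{F}_5)$ then rules out $n_\alpha\geq 2$. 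Hence $R/J(R)$ is commutative and is a subdirect product of copies of $\mathbb{F}_5$. To cut this down to a single copy, I invoke Proposition~\ref{2.2}: a nontrivial direct-product decomposition $R\simeq R_1\times R_2$ would force one factor to be strongly 2-nil-clean, yielding $6=2^3-2\in nil$ there, and combined with $5\in nil$ and $\gcd(5,6)=1$ this forces $1\in nil$—a contradiction. Lifting (necessarily central) idempotents from $R/J(R)$ across the nil ideal $J(R)$ transports indecomposability back and pins down $R/J(R)\simeq\mathbb{F}_5$; thus $R$ is local with nil maximal ideal $M=J(R)$ and residue field $\mathbb{F}_5$.

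The second stage—identifying $R$ with $\mathbb{Z}_{5^k}$—is where I expect the real work. The canonical map $\mathbb{Z}\to R$, $n\mapsto n\cdot 1_R$, has image a cyclic subring $\mathbb{Z}/5^k$, where $5^k$ is the additive order of $1_R$; the goal is to show this map is surjective, equivalently $M=5R$. A naive first attempt feeds $x\in M$ into its decomposition $x=\pm e\pm f+n$: since the only idempotents of the local ring $R$ are $0$ and $1$, reduction modulo $M$ forces $\pm e\pm f\equiv 0$ in $\mathbb{F}_5$, hence $\pm e\pm f=0$ and $x=n$, which is merely tautological. To extract genuine cyclic structure I would instead compare the decompositions of $x$, of $x+c\cdot 1_R$ for $c=1,\ldots,4$, and of products $r\cdot x$, aiming first to show that $M/M^2$ is one-dimensional over $\mathbb{F}_5$ and generated by the class of $5\cdot 1_R$, and then to induct on the nilpotency class of $M$ to conclude $M=5R$. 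Making these comparisons deliver actual constraints on the additive structure of $M$—rather than the trivial identity the naive attempt yields—is the technical crux on which the whole identification $R\simeq\mathbb{Z}_{5^k}$ rests.
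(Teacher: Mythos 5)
Your Stage~1 tracks the paper's argument in substance: the paper likewise reduces modulo the nil ideal $J(R)$ (Lemma~\ref{j}, Proposition~\ref{11}), observes that $R/J(R)$ is reduced, and pins down $R/J(R)\simeq\mathbb{Z}_5$ by splitting off a hypothetical nontrivial idempotent and testing an element against the polynomial criterion of Proposition~\ref{three}. One step of your version is shaky: you lift a central idempotent of $R/J(R)$ to $R$ and treat the lift as central so that Proposition~\ref{2.2} applies to a product decomposition of $R$. Idempotents do lift modulo a nil ideal, but central ones need not lift to central ones (compare $T_2(\mathbb{Z}_5)$ over its radical, whose quotient is $\mathbb{Z}_5\times\mathbb{Z}_5$ while the ring itself has no nontrivial central idempotent). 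The detour through $R$ is unnecessary, though: run your $\gcd(5,6)=1$ argument inside $R/J(R)$ itself, which is a weakly strongly 2-nil-clean reduced ring, so a nontrivial decomposition would force $6=0$ in a factor in which also $5=0$.

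The decisive issue is Stage~2, and you have located it correctly: you offer a plan, not a proof, for passing from ``$R$ is local with nil maximal ideal and residue field $\mathbb{F}_5$'' to $R\simeq\mathbb{Z}_{5^k}$. No such proof can exist, because that implication --- and hence the lemma as stated --- is false. Take $R=\mathbb{Z}_5[x]/(x^2)$: its only idempotents are $0$ and $1$, its nilpotents are exactly the multiples of $x$, and $\{0,\pm1,\pm2\}=\mathbb{Z}_5$, so every element $c+bx$ has the form $\pm e\pm f+n$ with all terms commuting. Thus $R$ is weakly strongly 2-nil-clean with $5=0\in nil(R)$, yet $R$ has characteristic $5$ and $25$ elements, so it is not isomorphic to any $\mathbb{Z}_{5^k}$. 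Your ``naive first attempt'' already explains why nothing forces $M=5R$: feeding $x\in M$ into its decomposition returns only the tautology $x=n$. The paper's own proof stops at $R/J(R)\simeq\mathbb{Z}_5$ and then simply asserts ``Hence $R\simeq\mathbb{Z}_{5^k}$''; the step you flagged as the technical crux is precisely the gap in the paper, and it cannot be repaired without weakening the conclusion (for instance to ``$R$ is local with nil Jacobson radical and $R/J(R)\simeq\mathbb{Z}_5$''), a change that would propagate to Theorem~\ref{maj}.
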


\begin{proof} $\Rightarrow $. In view of Proposition \ref{11}, $R/J(R)$ is weakly strongly 2-nil-clean and $J(R)$ is nil. Besides, Lemma \ref{2.12} employs to show that $R/J(R)$ is reduced. We now claim that $R/J(R)\simeq \mathbb{Z}_5$. To that goal, assume on the contrary that $$R/J(R)\neq \{J(R),1+J(R),2+J(R),-1+J(R),-2+J(R)\}.$$ Thus, there exists $a\in R$ such that $$a+J(R)\not\in  \{J(R),1+J(R),2+J(R),-1+J(R),-2+J(R)\}.$$ As $R/J(R)$ is weakly strongly 2-nil-clean, there are $e+J(R),f+J(R)\in Id(R/J(R))$ that all commute and $a+J(R)=\pm e+\pm f+J(R)$. Moreover, since $$a+J(R)\not\in  \{J(R),1+J(R),2+J(R),-1+J(R),-2+J(R)\},$$ we get either $$e+J(R)\not\in \{J(R),1+J(R),2+J(R),-1+J(R),-2+J(R)\},$$ or $$f+J(R)\not\in \{J(R),1+J(R),2+J(R),-1+J(R),-2+J(R)\}.$$ 

Firstly, if $e+J(R)\not\in \{J(R),1+J(R),2+J(R),-1+J(R),-2+J(R)\}$, then because any reduced ring is abelian, we receive $R/J(R)=(eR+J(R))/J(R)\times ((1-e)R+J(R))/J(R)$. Obviously, $5e,5(1-e)\in J(R)$ and so $(2e+J(R),1-e+J(R))\in R$. But, one checks that $$(2e+J(R),1-e+J(R))^3-(2e+J(R),1-e+J(R))\not\in J(R)$$ and $$(2e+J(R),1-e+J(R))^3\pm 3(2e+J(R),1-e+J(R))^2+2(2e+J(R),1-e+J(R))\not\in J(R),$$ leading to a contradiction. So, $$e+J(R)\in \{J(R),1+J(R),2+J(R),-1+J(R),-2+J(R)\}.$$ 

Analogically, we can see that $$f+J(R)\in \{J(R),1+J(R),2+J(R),-1+J(R),-2+J(R)\}.$$ This unambiguously means that $$R/J(R)=\{J(R),1+J(R),2+J(R),-1+J(R),-2+J(R)\}.$$ Finally, one discovers that $R/J(R)\simeq \mathbb{Z}_5$, as claimed. Hence, $R\simeq \mathbb{Z}_{5^k}$ for some integer $k$, as asserted.

$\Leftarrow$. It is rather clear, so we omit the argumentation.
\end{proof}

We end our work with the following attractive characterization of the weak strong 2-nil-clean property.

\begin{theorem}\label{maj} Let $R$ be a ring. Then, $R$ is weakly strongly 2-nil-clean if, and only if, $R\simeq R_1\times \mathbb{Z}_{5^k}$, where $R_1$ is a strongly 2-nil-clean ring and $k$ is a positive integer.
\end{theorem}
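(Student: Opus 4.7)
The plan is to prove both implications; the backward direction is an immediate consequence of earlier material, while the forward direction assembles the preceding reduction lemmas via a Chinese-Remainder-style splitting driven by Lemma~\ref{6}.

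For the ``if'' direction, suppose $R \simeq R_1 \times \mathbb{Z}_{5^k}$ with $R_1$ strongly 2-nil-clean. Lemma~\ref{1} together with the lemma characterizing rings in which $5$ is nilpotent show that $\mathbb{Z}_{5^k}$ is weakly strongly 2-nil-clean but \emph{not} strongly 2-nil-clean. Then Proposition~\ref{2.2}, or more directly Corollary~\ref{cor}, immediately yields that $R$ itself is weakly strongly 2-nil-clean.

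For the ``only if'' direction, Lemma~\ref{6} gives $30 \in nil(R)$, hence $30^s = 0$ for some $s \geq 1$. Since $\gcd(2^s, 3^s 5^s) = 1$ in $\mathbb{Z}$, there exist $a,b \in \mathbb{Z}$ with $a\cdot 2^s + b\cdot 15^s = 1$, so in $R$ the (two-sided) ideals generated by $2^s$ and by $15^s$ are comaximal with zero intersection. The Chinese Remainder Theorem yields $R \cong R/2^s R \times R/15^s R$; iterating on the second factor via $\gcd(3^s, 5^s) = 1$ produces a decomposition $R \cong R_1' \times R_2' \times R_3'$ with $2 \in nil(R_1')$, $3 \in nil(R_2')$, and $5 \in nil(R_3')$. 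By Proposition~\ref{hom}(i), each factor inherits weak strong 2-nil-cleanness.

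One then applies the earlier reductions: Lemma~\ref{2} upgrades $R_1'$ to strongly nil-clean (and hence strongly 2-nil-clean), Lemma~\ref{3} upgrades $R_2'$ to strongly 2-nil-clean, and the lemma characterizing the case $5 \in nil(R)$ identifies $R_3' \simeq \mathbb{Z}_{5^k}$ for some positive integer $k$ (with the convention that if $R_3'$ is trivial, $R$ is already strongly 2-nil-clean and the $\mathbb{Z}_{5^k}$-factor may be absorbed). Because the class of strongly 2-nil-clean rings is closed under finite direct products (immediate from the Chen--Sheibani characterization $a-a^3 \in nil(R)$), setting $R_1 := R_1' \times R_2'$ gives $R \simeq R_1 \times \mathbb{Z}_{5^k}$ as required. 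The main obstacle is the clean execution of the $30$-splitting, in particular checking comaximality and vanishing of the relevant ideal products; however, because $2,3,5$ are central integers, this reduces to an arithmetic calculation in $\mathbb{Z}$, and the substantive mathematical content (identification of the $5$-local piece as $\mathbb{Z}_{5^k}$) is already packaged into the preceding lemma.
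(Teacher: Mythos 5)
Your proof is correct and follows essentially the same route as the paper: Lemma~\ref{6} gives $30\in nil(R)$, a Chinese-Remainder splitting isolates the $2$-, $3$-, and $5$-primary factors (the paper splits only into a $6$-part and a $5$-part, delegating the further $2$/$3$ split to Corollary~\ref{4}), the factors are identified via Lemmas~\ref{2}, \ref{3} and the lemma treating $5\in nil(R)$, and Proposition~\ref{2.2} handles the converse. Your explicit verification of comaximality and your remark on the degenerate case where the $5$-part vanishes are in fact more careful than the paper's own two-line argument.
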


\begin{proof} Assume that $R$ is weakly strongly 2-nil-clean. Viewing Lemma \ref{three}, we have $30\in nil(R)$, and so we can derive that $R/30R$ is too weakly strongly 2-nil-clean. However, since $R/30R\simeq R/6R\times R/5R$, we consult with Corollary \ref{4} to get that $R_1$ is strongly $2$-nilclean ring and with Lemma \ref{5} to get that $R_2\simeq \mathbb{Z}_{5^k}$ for some integer $k>0$, as suspected.

Reciprocally, Lemma \ref{2.2} is a guarantor that the reverse implication is true.
\end{proof}

In closing, it is worthy of mentioning that the obtained decomposition in the last result also confirms that all weakly strongly 2-nil-clean rings are themselves strongly $\pi$-regular (compare with Lemma~\ref{2.12}).


\medskip


\vskip3.0pc

\end{document}